\documentclass[11pt]{amsart}
\usepackage[T1]{fontenc}
\usepackage[utf8]{inputenc}
\usepackage{lmodern}
\usepackage{amsmath,amssymb,amsthm,mathtools}
\usepackage{enumitem,array,booktabs,needspace}
\usepackage[table]{xcolor}
\usepackage[a4paper,margin=2.8cm]{geometry}
\usepackage[hypertexnames=false,hidelinks]{hyperref}
\definecolor{lightgray}{gray}{0.96}
\definecolor{headergray}{gray}{0.91}
\setlist{itemsep=2pt,topsep=5pt,parsep=1pt}
\numberwithin{equation}{section}
\theoremstyle{plain}
\newtheorem{theorem}{Theorem}[section]
\newtheorem{proposition}[theorem]{Proposition}
\newtheorem{lemma}[theorem]{Lemma}
\newtheorem{corollary}[theorem]{Corollary}
\theoremstyle{definition}
\newtheorem{definition}[theorem]{Definition}

\theoremstyle{plain}
\newtheorem{example}[theorem]{Example}
\theoremstyle{plain}
\newtheorem{remark}[theorem]{Remark}
\DeclareMathOperator{\diag}{diag}

\DeclareMathOperator{\supp}{supp}

\allowdisplaybreaks[1]
\title[The ``para-classical'' orthogonal polynomials]
{The manufacture of examples II: the ``para-classical'' orthogonal polynomials}

\author{K. Castillo}
\address{CMUC, Department of Mathematics, University of Coimbra,
3000-143 Coimbra, Portugal}
\email{math@keniercastillo.com}
\author{G. Gordillo-N\'u\~nez}
\address{CMUC, Department of Mathematics, University of Coimbra,
3000-143 Coimbra, Portugal}
\email{up202310693@up.pt}
\subjclass[2020]{Primary 42C05; Secondary 33C45, 33D45, 39A13}
\keywords{Classical orthogonal polynomials, para families, structural equations,
finite orthogonality, persymmetric Jacobi matrices, perfect state transfer}
\date{}
\hypersetup{pdftitle={The manufacture of examples II: the "para-classical" orthogonal polynomials},
pdfauthor={K. Castillo and G. Gordillo-Nunez}}
\begin{document}

\begin{abstract}
Under the stated regularity assumptions, the full orthogonality
functionals of the five finite para-families considered here are classical
for the respective affine, quadratic, $q$-quadratic, $q$-linear
and alternating calculi.
For each of the para--Krawtchouk, para--Racah, $q$-para--Racah and
$q$-para--Krawtchouk families, two finite solutions of one global
structural equation have $N+1$ nodes altogether and common moments
through degree $N-1$.
Their relative mass changes only the central recurrence coefficients.
We prove regularity for every normalised combination with both component
masses non-zero and recover the full published functionals and recurrences.
The alternating
construction identifies para--Bannai--Ito within the same structural
framework. For the four non-alternating families, equal positive combinations give
persymmetric Jacobi matrices; the known spectral gap criterion
then decides perfect state transfer. Explicit parameter correspondences
locate the classical components and resonant limits of the full systems.
A positive seven-point example shows how further named families can
be manufactured without a new governing principle. Counterexamples
to mathematical assertions in three of four recent preprints expose
the consequences of insufficient hypotheses for finite representations
and physical models. In an era of AI-assisted
symbolic production, we call for a standard that rewards structural
explanation and verification rather than the proliferation of examples.
\end{abstract}

\maketitle

\section{Introduction}\label{sec:introduction}

We had set this manuscript aside in the wake of recent advances in
artificial intelligence. We expected AI tools, given the structural
theory already developed in our earlier work, to recover these finite
families with little further guidance if a genuine need arose.
However, four preprints
\cite{BC26,BCLMRV26K,BCLMRV26R,BCLMRV26B}, submitted to arXiv on
24 September 2026 and comprising 126 pages in their first versions,
have prompted us to
return to the manuscript, make the structural explanation explicit
and examine their claims.

The title continues \emph{The manufacture of examples: the
$(-1)$-classical orthogonal polynomials} \cite{CG26b}, now accepted
for publication in \emph{Res. Math. Sci.}, a journal we hold in the
highest regard. Here ``para-classical'' refers to the families
conventionally called para--Krawtchouk, para--Racah, $q$-para--Racah,
$q$-para--Krawtchouk and para--Bannai--Ito. Under the stated assumptions,
their full orthogonality functionals are classical for fixed polynomial
calculi. The quotation marks are deliberate: in the structural sense
established below, these are classical families in disguise. The prefix
introduces no new category of classicality.

The explanation rests on the theory developed in
\emph{Half-step-invariant sets, admissible maps, and classical
orthogonal polynomials} \cite{C26}.
A half-step-invariant domain and an admissible map determine the
polynomial operators; a global structural equation defines
classicality. The five families studied here occupy only a small part of
the classical systems associated with admissible maps.
The framework of \cite[Sections~7--8]{C26} treats finite
and infinite orthogonality, supplies regularity criteria and recurrence
formulas, and constructs further families and their functionals.
It also recovers finite measures: when a terminal polynomial annihilates
the functional and has simple zeros, its zeros and the recurrence
determine the support and weights
\cite[Proposition~7.14 and Example~7.15]{C26}.
We specialise the presentation to these families and the structure
governing them, so that their construction, free parameter and complete
recurrence can be understood directly.

For each of the four non-alternating families, endpoint zeros in one
structural equation close two finite progressions with $N+1$ nodes
altogether. Cancellation between neighbouring masses determines their
weights up to a constant. Normalising each component gives two
mass-one solutions. Their moments through degree $N-1$ coincide;
the moment of degree $N$ determines all later moments. The relative
mass parametrises this freedom. The shared moments fix the first half
of the recurrence, the relative mass changes only its central
coefficients, and the remaining entries follow by reflection.
Regularity holds for every complex combination parameter other than
$0$ and $1$, under the stated component assumptions. Positivity is
required later for the physical matrices. The alternating case uses
the same structural definition through a different construction.

This wave follows more than a decade of publications. Earlier studies
of para--Krawtchouk, para--Racah, $q$-para--Racah,
$q$-para--Krawtchouk and para--Bannai--Ito appeared
in \emph{J. Phys. A: Math. Theor.}~\cite{VZ12},
\emph{J. Math. Anal. Appl.}~\cite{LVZ16,LVZ18},
\emph{J. Math. Phys.}~\cite{BGVZ21} and
\emph{SIGMA}~\cite{PVZ23}, respectively. Direct algebraic and
physical developments appeared in
\emph{SIGMA}~\cite{GVZ13},
\emph{Commun. Math. Phys.}~\cite{GIVZ14},
\emph{Ann. Phys.}~\cite{GVZ16},
\emph{J. Phys. A: Math. Theor.}~\cite{LVZFR16,TVZ17,GTVZ20}
and \emph{J. Math. Phys.}~\cite{BGVZ22}.
These twelve earlier articles occupy 211 pages in their published
journal versions.
Other related developments are recorded in
\cite{GTVZ17,BGBGVZ22,SVZ22}.
Recurrence-based characterisations and the para--Krawtchouk case have
already been reassessed within the general classical framework
\cite[Sections~2 and~4]{CG25}.
Our own critical work has encountered serious difficulties in gaining
acceptance. The referee report on the companion paper and our response
allow readers to examine one such exchange for themselves.\footnote{See the
referee report (\url{https://keniercastillo.com/manufacture-examples-report.pdf})
and the
authors' response (\url{https://keniercastillo.com/manufacture-examples-response.pdf}).}
The report's opening objection appeals to a handbook's standing and
to a supposed majority convention to reject our use of orthogonality.
Neither refutes the regular-functional definition adopted here.
An objection must address that definition and its consequences.
\enlargethispage{4pt}

The earlier papers provide the historical and mathematical setting.
Our examination of specific claims in Section~\ref{sec:discussion}
concerns the four September~2026 preprints. The broader question is
what mathematical work warrants publication when AI-assisted tools
make symbolic constructions easier to produce and examine. A further
construction must establish what it explains and adds; neither
professional standing nor accumulated publications can replace that
argument. We impose that obligation first on our own work.

The term ``bi-lattice'' describes a support. It supplies no polynomial
calculus and no criterion of classicality; treating it as a structural
classification is misleading. Its role in classicality is determined by
the half-step-invariant domain, admissible map and global equation.

Consider the odd para--Krawtchouk system, with $N=2j+1\ge3$ and
$0<\gamma<2$. Its nodes are \cite[Eq.~(3.1)]{VZ12}
\[
 y(k)=k+\frac{\gamma-1}{2}\{1-(-1)^k\},
 \quad 0\le k\le N.
\]
Thus $y(2k)=2k$ and $y(2k+1)=2k+\gamma$. With its published
weights $\omega_k$ and $X(s)=2s$, its orthogonality functional
can equally be written as
\[
 \langle\mathbf u,p\rangle
 =\sum_{k=0}^j\omega_{2k}p(X(k))
  +\sum_{k=0}^j\omega_{2k+1}p(X(k+\gamma/2)).
\]
The two progressions are values of one affine map. Neither the
functional nor its polynomials have changed. More generally, any
finite nodes $x_k$ can be written as $X_h(s_k)$, with
$X_h(s)=hs+v$ and $s_k=(x_k-v)/h$, for any $h\ne0$.
A support parametrisation therefore establishes no new classical
structure. Its relation to the half-step-invariant domain and the
admissible map is what defines the polynomial operators.

For the affine map $X_h$ and a polynomial $p$, the half-step divided difference is
\[
 (D_hp)(x)=\frac{p(x+h/2)-p(x-h/2)}h,\quad
 D_hx^3=3x^2+\frac{h^2}{4}.
\]
Nor does the interlaced listing itself provide the required
calculus. Extend $y$ by the same formula to all integer indices.
For a polynomial $f$, put
\[
 (\mathcal S_yf)(y(k))=\frac{f(y(k+1))+f(y(k-1))}{2},
 \quad
 (\mathcal D_yf)(y(k))=\frac{f(y(k+1))-f(y(k-1))}{y(k+1)-y(k-1)}.
\]
Then
\[
 (\mathcal S_yx)(y(k))=y(k)+(\gamma-1)(-1)^k,
 \quad
 (\mathcal D_yx^2)(y(k))=2y(k)+2(\gamma-1)(-1)^k.
\]
For $\gamma\ne1$, these fail the required polynomial degree
bounds. An affine polynomial agreeing with the first expression at
$0,2$ must be $x+\gamma-1$, which disagrees at the odd nodes.
The original para--Krawtchouk paper already recognises classicality
on a uniform grid: its equation uses $x\mapsto x\pm2$
\cite[Eqs.~(3.14)--(3.15) and the following discussion]{VZ12}.
The apparent order changes when that equation is written in the
interlaced numbering; the underlying affine calculus does not.

The exclusions of classicality in the para--Racah and
$q$-para--Racah papers concern the usual non-degenerate spectral
setting; the latter explicitly refers to Leonard duality
\cite{LVZ16,LVZ18}. The structural definition of \cite{C26}, also used
in \cite{CG26b}, admits finite regular functionals and resonant
eigenvalues. We verify its global structural equation for the full
published functionals.

Section~\ref{sec:manufactured} constructs a positive seven-point example
whose parameter changes precisely the central recurrence coefficients.
Section~\ref{sec:discussion} examines the four recent papers.
Appendix~\ref{app:classical-identifications} gives explicit connections
with the named classical families in \cite{KLS10}, distinguishing
component substitutions from resonant limits of the full systems.
The latter must retain the parameter determining the central recurrence
coefficients.

Table~\ref{tab:structural-location} states the maps and pairs we
shall verify. The first four use the usual Nikiforov--Uvarov
calculi \cite{NU83,NU84}; the last uses the alternating calculus
of \cite{C26,CG26b}. All parameters are those of the original
families. The construction requires no additional definition of
orthogonality.

% The table is inserted from the original manuscript by assemble_revision.py.
\begin{table}[htbp]
\centering
\small
\renewcommand{\arraystretch}{1.3}
\setlength{\tabcolsep}{5pt}
\begin{tabular}{@{}p{.24\linewidth}p{.72\linewidth}@{}}
\toprule
\rowcolor{gray!18}
Family and map & structural pair\\[7pt]
\midrule
Para--Krawtchouk\par
$X(s)=2s$
&
$\displaystyle
 \begin{aligned}[t]
  \phi(x)&=-\frac{x^2-(m+\gamma)x+2r(\gamma/2+\ell)}{m},\\[7pt]
  \psi(x)&=x-\frac{2r(\gamma/2+\ell)}{m}.
 \end{aligned}$
\\[7pt]
\midrule
\rowcolor{gray!7}
Para--Racah\par
$X(s)=-s^2$
&
$\displaystyle
 \begin{aligned}[t]
  \phi(x)&=-\frac1m
  \bigl[x^2-\{r\ell-a(a+r)-c(c+\ell)\}x+ac(a+r)(c+\ell)\bigr],\\[7pt]
  \psi(x)&=x+\frac{r c(c+\ell)+\ell a(a+r)}{m}.
 \end{aligned}$
\\[7pt]
\midrule
$q$-para--Racah\par
$\displaystyle X(s)=\frac{q^s+q^{-s}}2$
&
$\displaystyle
 \begin{aligned}[t]
  \phi(x)&=\frac{t-t^{-1}}{4(E_4-1)}
  \bigl[2(1+E_4)x^2-(E_1+E_3)x+E_2-1-E_4\bigr],\\[7pt]
  \psi(x)&=x+\frac{E_1-E_3}{2(E_4-1)}.
 \end{aligned}$
\\[7pt]
\midrule
\rowcolor{gray!7}
$q$-para--Krawtchouk\par
$X(s)=q^s$
&
$\displaystyle
 \begin{aligned}[t]
  \phi(x)&=\frac{t-t^{-1}}{2(q^{-m}-1)}
  \bigl[(1+q^{-m})x^2
  -\{\Delta+1+q^{-m}(\Delta q^r+q^\ell)\}x+2\Delta\bigr],\\[7pt]
  \psi(x)&=x+
  \frac{\Delta+1-q^{-m}(\Delta q^r+q^\ell)}{q^{-m}-1}.
 \end{aligned}$
\\[7pt]
\midrule
Para--Bannai--Ito\par
$X(s)=e^{\pi i s}$
&
$\displaystyle
 \begin{aligned}[t]
  (\phi,\psi)&=(0,x-A)
       &&\text{(complementary)},\\[7pt]
  (\phi,\psi)&=((x-\rho)(x-A),0)
       &&\text{(Bannai)}.
 \end{aligned}$
\\[7pt]
\bottomrule
\end{tabular}

\caption[Admissible maps and structural pairs for the finite para-families]{Admissible maps and structural pairs for the finite para-families.\label{tab:structural-location} In the first four rows, $N+1$ is the number of masses and \begin{equation}\label{eq:r-ell}
 r=\lfloor N/2\rfloor,\quad
 \ell=\lceil N/2\rceil-1,\quad
 m=r+\ell=N-1,\quad N\ge2.
\end{equation}
In the $q$-cases, $t=q^{1/2}$, where $q\ne0$ is not a root
of unity. For the $q$-para--Racah pair, we put
\begin{equation}
 \begin{aligned}
  E_1&=a+c+a^{-1}q^{-r}+c^{-1}q^{-\ell},\\[7pt]
  E_2&=ac+q^{-r}+q^{-\ell}
       +(a/c)q^{-\ell}+(c/a)q^{-r}+(ac)^{-1}q^{-m},\\[7pt]
  E_3&=q^{-m}(a^{-1}+c^{-1}+a q^r+c q^\ell),
  \quad E_4=q^{-m}.
 \end{aligned}
\end{equation}
The letters $a,c,\gamma,\Delta$ are the parameters of the
corresponding families. The parameters $\rho$ and $A$ are defined in \eqref{eq:BI-parameters}.}
\end{table}

\needspace{6\baselineskip}
Let $\mathcal P=\mathbb C[x]$ and let $\mathcal P'$ be its
algebraic dual, consisting of complex linear functionals. The pairing
$\langle\mathbf u,p\rangle$ is bilinear, and
$\langle\delta_z,p\rangle=p(z)$ defines a mass. Following
\cite{C26}, take a domain $U$ with $U\pm\tfrac12\subseteq U$ and
a map $X:U\to\mathbb C$, and put
\[
 Y(s)=X(s+\tfrac12),\quad Z(s)=X(s-\tfrac12).
\]
We use admissible maps: $X(U)$ is infinite, $Y-Z$ is non-zero
on $U$, and
\begin{equation}\label{eq:DS}
 (Dp)(X(s))=\frac{p(Y(s))-p(Z(s))}{Y(s)-Z(s)},
 \quad
 (Sp)(X(s))=\frac{p(Y(s))+p(Z(s))}{2}
\end{equation}
define polynomial operators, with $D$ lowering degree by at least
1 and $S$ not increasing it. These are operators on all of
$\mathcal P$. The finite support of a functional need not be
contained in $X(U)$. For the explicit maps below, the notation
$X,Y,Z$ also denotes their expressions outside $U$ wherever
those expressions are defined. Such evaluations parametrise
the masses; the calculus is still determined on the admissible
domain. At a removable singularity of a displayed quotient,
evaluations use the polynomial extension of the operator.
For Table~\ref{tab:structural-location}, explicit admissible sets are
$U=\mathbb C$ in the affine, $q$-linear and alternating cases, and
$U=\mathbb C\setminus\tfrac12\mathbb Z$ for $X(s)=-s^2$.
In the $q$-cases fix a logarithm $L_q$ of $q\ne0$, put
$q^s=e^{L_qs}$, $t=e^{L_q/2}$, and exclude roots of unity.
For the $q$-quadratic map use
$U=\mathbb C\setminus\{\pi i k/L_q+n/2:k,n\in\mathbb Z\}$.
These sets are half-step-invariant, $X(U)$ is infinite and $Y-Z\ne0$.
For each map, $Y+Z$ and $YZ$ are polynomials in $x=X(s)$
of degrees at most 1 and 2; the symmetric monomial quotient
and average in \eqref{eq:DS} therefore have the required degrees.

For $f,p\in\mathcal P$ and $\mathbf u\in\mathcal P'$, define
multiplication and the dual operators by
\[
 \langle f\mathbf u,p\rangle=\langle\mathbf u,fp\rangle,
 \quad
 \langle\mathbf D\mathbf u,p\rangle=-\langle\mathbf u,Dp\rangle,
 \quad
 \langle\mathbf S\mathbf u,p\rangle=\langle\mathbf u,Sp\rangle.
\]
These pairings define elements of $\mathcal P'$ because $fp,Dp,Sp$
are polynomials. Write $\mu_k=\langle\mathbf u,x^k\rangle$.
For $p=\sum_{k=0}^d p_kx^k$ one has
$\langle\mathbf u,p\rangle=\sum_{k=0}^d p_k\mu_k$.
Every sum is finite: arbitrary moment sequences define functionals
without convergence or growth assumptions. Thus the dual actions and
identities in $\mathcal P'$ require no topology or continuity hypothesis.
Put $\Delta_n=\det(\mu_{i+j})_{i,j=0}^n$. Following \cite{C26},
for any integer $M\ge1$ we call $\mathbf u$ \emph{regular of order $M$} if
\[
 \Delta_n\ne0,\quad 0\le n<M.
\]
Equivalently, it has unique monic orthogonal polynomials
$P_0,\ldots,P_{M-1}$ with non-zero squared norms. Orthogonality
for a monic polynomial of degree $n\ge1$ is a linear system
with determinant $\Delta_{n-1}$; its squared norm is
$\Delta_n/\Delta_{n-1}$, with $\Delta_{-1}=1$. No condition
on $\Delta_M$ is imposed. If its support consists of $M$ distinct
points, their monic product is the terminal polynomial $P_M$:
it annihilates the functional and has zero squared norm.
Over $\mathbb C$, distinct nodes and non-zero weights alone do not
ensure all the required Hankel determinants. Positivity is a
separate property.

\begin{definition}\label{def:classical}
A functional regular of order at least 3 is classical with
respect to $X$ if it satisfies the structural equation
\begin{equation}\label{eq:structural}
 \mathbf D(\phi\mathbf u)=\mathbf S(\psi\mathbf u),
\end{equation}
where $\phi,\psi$ have degrees at most 2 and 1, respectively,
and are not both zero.
\end{definition}

This is \cite[Definition~6.6]{C26}. We call $(\phi,\psi)$ the
structural pair. Multiplying both entries by the same non-zero
constant preserves the equation; either entry may be zero.
The additional non-vanishing assumptions of the recurrence theorems
are not part of this definition. A one- or two-point component may be regular
of order 1 or 2 without meeting the order required for classicality;
the full functional is the object classified here.

This definition follows the structural approach of Bochner and
Nikiforov--Uvarov. In the ordinary differential setting,
$D=d/dx$ and $S=\mathrm{id}_{\mathcal P}$ give the structural
equation and Bochner's differential operator.
For the admissible maps considered here, put $L=\phi D^2+\psi SD$.
The structural equation gives formal symmetry:
\[
 \langle\mathbf u,(Lp)f\rangle=\langle\mathbf u,p(Lf)\rangle,
 \quad p,f\in\mathcal P.
\]
Regularity and the degree bounds then give
$LP_n=\lambda_nP_n$, for suitable eigenvalues $\lambda_n$,
throughout the regular degree range
\cite[Theorem~9.1 and Corollary~9.4]{C26}.
Finite resonance allows repeated eigenvalues; in the alternating
regime the spectrum may only distinguish parity, and $L$ may even
vanish \cite[Remark~9.5]{C26}. The structural equation remains
the condition defining classicality.

The monic three-term recurrence is
\begin{equation}\label{eq:TTRR}
 P_{n+1}(x)=(x-B_n)P_n(x)-C_nP_{n-1}(x),
 \quad P_{-1}=0,\quad P_0=1,\quad C_0=0.
\end{equation}
For a finite system we retain $B_0,\ldots,B_N$ and
$C_1,\ldots,C_N$, with all $C_n\ne0$. The last row defines
the terminal polynomial $P_{N+1}$.

We first construct the finite solutions and prove regularity and
the recurrence identities in Section~\ref{sec:two-progressions}.
Section~\ref{sec:families} identifies the published functionals;
Section~\ref{sec:PST} adds the spectral gap condition for transfer
at a prescribed time.

\section{The structural equation and finite masses}
\label{sec:two-progressions}

With the calculus fixed by half-step invariance and admissibility,
we now construct finite functionals satisfying the global structural
equation. Their normalisation and regularity require separate checks.

We separate three arguments. Endpoint cancellation constructs
finite solutions of the global equation. For the four non-alternating
pairs, the moment equations leave one moment free after normalisation.
Two regular components with the required common moments then
determine the recurrence of their combination.
This last argument uses neither classicality nor a particular map.
It explains the four non-alternating para-families through hypotheses
independent of their names. The alternating case is treated in
Section~\ref{sec:para-BI}.

\subsection{Finite sums of masses}
Fix the half-step-invariant domain $U$, the admissible map $X$
and its operators $D,S$ as in Section~\ref{sec:introduction}.
Let $\phi,\psi$ have degrees at most 2 and 1, respectively,
and suppose they are not both zero.
For a finite arithmetic progression in $U$, consider
\begin{equation}\label{eq:finite-ansatz}
 \mathbf u=\sum_{k=0}^{r}\omega_k\delta_{X(a+k)}.
\end{equation}
The structural equation for this functional is equivalent to
\[
 0=\langle\mathbf u,\phi Dp+\psi Sp\rangle
   =\sum_{k=0}^{r}\omega_k(\phi Dp+\psi Sp)(X(a+k)),
 \quad p\in\mathcal P.
\]
The half-step construction makes neighbouring evaluations coincide.
To express their coefficients, put
\begin{equation}\label{eq:lambdas}
 \lambda_-(s)=\frac{\phi(X(s))}{Y(s)-Z(s)}
                      -\frac{\psi(X(s))}{2},\quad
 \lambda_+(s)=\frac{\phi(X(s))}{Y(s)-Z(s)}
                      +\frac{\psi(X(s))}{2}.
\end{equation}
The definitions of $D,S$ in \eqref{eq:DS} give
\[
 (\phi Dp+\psi Sp)(X(s))
 =\lambda_+(s)p(Y(s))-\lambda_-(s)p(Z(s)).
\]
Since $Y(a+k)=Z(a+k+1)$, the two contributions from
neighbouring masses evaluate $p$ at the same point.
Collecting them, we obtain
\begin{align*}
 \langle\mathbf u,\phi Dp+\psi Sp\rangle
 ={}&-\omega_0\lambda_-(a)p(Z(a))
     +\omega_r\lambda_+(a+r)p(Y(a+r))\\[7pt]
 &+\sum_{k=0}^{r-1}
  \{\omega_k\lambda_+(a+k)-\omega_{k+1}\lambda_-(a+k+1)\}
          p(Y(a+k)).
\end{align*}
The two endpoint coefficients and every interior coefficient
therefore vanish if
\[
 \lambda_-(a)=0,\quad \lambda_+(a+r)=0,\quad
 \omega_{k+1}\lambda_-(a+k+1)=\omega_k\lambda_+(a+k)
 ,\quad 0\le k<r.
\]
When the interior factors $\lambda_+(a+k)$ and
$\lambda_-(a+k+1)$ are non-zero, their ratios determine
all weights from $\omega_0$. The cancellation is valid for every
polynomial, regardless of its degree. It therefore proves the
identity in $\mathcal P'$, rather than a truncated moment test.

\begin{proposition}\label{lem:one-progression}
Suppose that the nodes $X(a),\ldots,X(a+r)$ are distinct,
\begin{equation}
 \lambda_-(a)=0,\quad \lambda_+(a+r)=0,
\end{equation}
and $\lambda_+(a+k)\lambda_-(a+k+1)\ne0$ for $0\le k<r$.
Then, for any $\omega_0\ne0$, the weights
\begin{equation}
 \omega_k=\omega_0\prod_{i=0}^{k-1}
       \frac{\lambda_+(a+i)}{\lambda_-(a+i+1)},
 \quad 0\le k\le r,
\end{equation}
give a solution of \eqref{eq:structural} of the form
\eqref{eq:finite-ansatz}. 

If $Z(a),Y(a),\ldots,Y(a+r)$
are also distinct, the structural equation forces the endpoint
conditions and the neighbouring cancellations for every solution
with non-zero weights. Under the stated non-zero conditions,
the weights are therefore unique up to their common factor.
\end{proposition}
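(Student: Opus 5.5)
The plan is to read both parts off the collected identity displayed just before the statement. For every $p\in\mathcal P$ it expresses $\langle\mathbf u,\phi Dp+\psi Sp\rangle$ as a combination of the values $p(Z(a))$, $p(Y(a+k))$ for $0\le k<r$, and $p(Y(a+r))$. The structural equation \eqref{eq:structural} is equivalent to the vanishing of this pairing for all $p$. This equivalence holds because $\langle\mathbf D(\phi\mathbf u)-\mathbf S(\psi\mathbf u),p\rangle=-\langle\mathbf u,\phi Dp+\psi Sp\rangle$ by the definitions of the dual operators.

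For existence, assume the endpoint conditions. Then the coefficients of $p(Z(a))$ and $p(Y(a+r))$ vanish. Define the weights by the stated product, which is legitimate since every $\lambda_-(a+i+1)\ne0$. Induction on $k$ gives $\omega_{k+1}\lambda_-(a+k+1)=\omega_k\lambda_+(a+k)$, so every interior coefficient vanishes. The pairing is therefore zero for every polynomial, and \eqref{eq:structural} holds. Since $\omega_0\ne0$ and every $\lambda_+(a+i)\ne0$, all weights are non-zero. The distinct nodes then make \eqref{eq:finite-ansatz} a genuine sum of $r+1$ masses. One point needs checking: the identity $(\phi Dp+\psi Sp)(X(s))=\lambda_+(s)p(Y(s))-\lambda_-(s)p(Z(s))$ is used at $s=a+k$, which lies in $U$. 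There $Y-Z\ne0$, so no removable-singularity convention is needed.

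For uniqueness, suppose further that the $r+2$ points $Z(a),Y(a),\ldots,Y(a+r)$ are distinct, and let $\mathbf u$ with non-zero weights solve \eqref{eq:structural}. The key step is Lagrange interpolation. For each of these $r+2$ points, take the polynomial of degree $r+1$ that equals $1$ there and $0$ at the others. Inserting it into the vanishing identity isolates a single coefficient. This gives $\omega_0\lambda_-(a)=0$, $\omega_r\lambda_+(a+r)=0$, and every neighbouring cancellation. Since $\omega_0,\omega_r\ne0$, the endpoint conditions follow. If the interior factors are non-zero, the cancellations are the recursion $\omega_{k+1}=\omega_k\lambda_+(a+k)/\lambda_-(a+k+1)$, so every weight is determined by $\omega_0$. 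Hence the solution is unique up to the common factor.

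The main obstacle is only bookkeeping. We must make sure that the collected identity really uses $Y(a+k)=Z(a+k+1)$, which follows from the definitions of $Y$ and $Z$. The distinctness hypothesis is exactly what allows the interpolation to separate the coefficients; without it, coefficients at coinciding points could cancel one another.
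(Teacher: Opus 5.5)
Your proof is correct and follows the paper's argument. The product formula gives the neighbouring cancellations, so every coefficient of the collected identity vanishes; conversely, a Lagrange polynomial equal to one at a single point among $Z(a),Y(a),\ldots,Y(a+r)$ and zero at the others isolates each coefficient, and your extra checks (the dual-pairing sign, non-vanishing weights, $Y(a+k)=Z(a+k+1)$) only make explicit what the paper leaves implicit.
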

\begin{proof}
The product gives
$\omega_{k+1}\lambda_-(a+k+1)=\omega_k\lambda_+(a+k)$,
so every coefficient in the collected sum is zero. Conversely,
evaluations at distinct points are independent: a polynomial can
be chosen to be one at any one of them and zero at the others.
Hence each coefficient must vanish.
\end{proof}

All evaluations must be defined for the chosen calculus, with
removable singularities interpreted as in Section~\ref{sec:introduction}.
For the explicit maps below, that continuation convention
also allows progressions outside $U$: the displayed identity and
every half-step evaluation used in the cancellation must be
defined there. Their nodes need not belong to $X(U)$.

Normalisation requires a non-zero sum of the constructed weights;
non-zero individual weights on distinct complex nodes do not
imply this. Regularity also remains to be established. The endpoint
argument proves the structural identity without a Hankel assumption.

\begin{example}[The two para--Krawtchouk sums]\label{ex:PK-masses}
For the affine map and pair in Table~\ref{tab:structural-location},
\[
 \lambda_-(s)=-\frac{2s(s-\gamma/2)}m,\quad
 \lambda_+(s)=-\frac{2(s-r)(s-\gamma/2-\ell)}m.
\]
The two progressions start at $0,\gamma/2$ and end at
$r,\gamma/2+\ell$. For $0<\gamma<2$ the nodes are distinct
and all interior weight ratios are positive:
\begin{equation}\label{eq:PK-ratios}
 \begin{aligned}
 &\frac{(k-r)(k-\gamma/2-\ell)}{(k+1)(k+1-\gamma/2)},
 \quad 0\le k<r,\\[7pt]
 &\frac{(k-\ell)(k+\gamma/2-r)}{(k+1)(k+1+\gamma/2)},
 \quad 0\le k<\ell.
 \end{aligned}
\end{equation}
With empty products equal to 1, write $(b)_k=\prod_{i=0}^{k-1}(b+i)$
and $(b_1,\ldots,b_p)_k=\prod_i(b_i)_k$. The two products are
\begin{equation}\label{eq:PK-products}
 \begin{aligned}
 W_k^{(0)}&=\frac{(-r,-\gamma/2-\ell)_k}{k!(1-\gamma/2)_k},
 \quad 0\le k\le r,\\[7pt]
 W_k^{(1)}&=\frac{(-\ell,\gamma/2-r)_k}{k!(1+\gamma/2)_k},
 \quad 0\le k\le\ell.
 \end{aligned}
\end{equation}
The finite Vandermonde identity
\[
 \sum_{k=0}^{R}\frac{(-R,b)_k}{k!(c)_k}
 =\frac{(c-b)_R}{(c)_R}
\]
gives their sums as
\begin{equation}\label{eq:PK-normalisations}
 \begin{aligned}
 Z_0&=\sum_{k=0}^{r}W_k^{(0)}
     =\frac{(\ell+1)_r}{(1-\gamma/2)_r},\\[7pt]
 Z_1&=\sum_{k=0}^{\ell}W_k^{(1)}
     =\frac{(r+1)_\ell}{(1+\gamma/2)_\ell}.
 \end{aligned}
\end{equation}
Consequently $\mathbf u^{(0)}=Z_0^{-1}\sum_{k=0}^r
W_k^{(0)}\delta_{2k}$ and
$\mathbf u^{(1)}=Z_1^{-1}\sum_{k=0}^{\ell}
W_k^{(1)}\delta_{2k+\gamma}$ are independent mass-one solutions.
\end{example}

The structural equation is linear in the functional.
If $\mathbf u^{(0)}$ and $\mathbf u^{(1)}$ solve it, then
\[
 \mathbf D\bigl(\phi(A\mathbf u^{(0)}+B\mathbf u^{(1)})\bigr)
 =\mathbf S\bigl(\psi(A\mathbf u^{(0)}+B\mathbf u^{(1)})\bigr),
 \quad A,B\in\mathbb C.
\]
In particular, suppose Proposition~\ref{lem:one-progression}
gives the two solutions
\begin{equation}\label{eq:two-components}
 \mathbf u^{(0)}=\sum_{k=0}^r\omega_k^{(0)}\delta_{X(a+k)},
 \quad
 \mathbf u^{(1)}=\sum_{k=0}^{\ell}\omega_k^{(1)}\delta_{X(c+k)},
\end{equation}
where all nodes in the two sums are distinct. Suppose also that
\[
 \langle\mathbf u^{(0)},1\rangle
 =\langle\mathbf u^{(1)},1\rangle=1.
\]
We say that these functionals have mass one. Their disjoint
supports make them linearly independent, and
\begin{equation}\label{eq:family-mixture}
 \mathbf u_\alpha=(1-\alpha)\mathbf u^{(0)}
                       +\alpha\mathbf u^{(1)},\quad \alpha\in\mathbb C,
\end{equation}
is a solution of \eqref{eq:structural} of mass one. Here $\alpha$
changes the relative masses while the calculus and structural pair
remain fixed. To decide whether these combinations exhaust the
normalised solutions, we use the triangular moment equations of
\cite[Section~7]{C26} and prove the required dimension count.

Put
\[
 \mu_n=\langle\mathbf u,x^n\rangle,\quad
 d_n=[x^{n+1}]\{\phi D(x^n)+\psi S(x^n)\},\quad n\ge0,
\]
where $[x^k]p$ means the coefficient of $x^k$ in $p$.
Since the monomials form a basis of $\mathcal P$, the structural
identity is equivalent to the equations
\[
 \langle\mathbf u,\phi D(x^n)+\psi S(x^n)\rangle=0
 ,\quad n\ge0.
\]
The degree properties in \eqref{eq:DS} give
$\deg\{\phi D(x^n)+\psi S(x^n)\}\le n+1$. Thus
\begin{equation}\label{eq:triangular-moments}
 d_n\mu_{n+1}+\sum_{k=0}^n a_{n,k}\mu_k=0,\quad n\ge0,
\end{equation}
where $a_{n,k}=[x^k]\{\phi D(x^n)+\psi S(x^n)\}$ for $0\le k\le n$.
All these coefficients are fixed by $X$ and $(\phi,\psi)$.
The system is triangular: when $d_n\ne0$, its $n$th equation
determines $\mu_{n+1}$ from the preceding moments. Throughout the
next proposition these equations are imposed for all $n\ge0$.
This global scope differs from the finite moment criterion of
\cite[Theorem~7.1]{C26}, which tests only a prescribed finite
polynomial space.

\begin{proposition}\label{prop:free-moment}
If $d_n\ne0$ for every $n\ge0$, the space of solutions of
\eqref{eq:structural} in $\mathcal P'$ has dimension one.
Fixing $\mu_0=1$ gives exactly one solution.

Suppose instead that, for some $m\ge0$, $d_m=0$ and
$d_n\ne0$ for $n\ne m$, and that a solution of mass one exists.
Then the solution space has dimension two. After fixing $\mu_0=1$,
the moments $\mu_1,\ldots,\mu_m$ are fixed, $\mu_{m+1}$ is free,
and all later moments are determined by it. Any two linearly
independent solutions of mass one therefore give every solution
of mass one through \eqref{eq:family-mixture}.
\end{proposition}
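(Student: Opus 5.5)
The plan is to work entirely with the triangular moment system \eqref{eq:triangular-moments}. Since the monomials form a basis of $\mathcal P$, a functional is determined by its moment sequence $(\mu_n)_{n\ge0}$, and every sequence defines a functional. Thus the solution space of \eqref{eq:structural} is isomorphic, via $\mathbf u\mapsto(\mu_n)$, to the space of sequences satisfying \eqref{eq:triangular-moments} for all $n\ge0$. Both parts of the statement therefore reduce to counting the degrees of freedom in a lower-triangular infinite linear system whose superdiagonal entries are the $d_n$.

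\textbf{First part.} If every $d_n\ne0$, the $n$th equation gives
\[
\mu_{n+1}=-d_n^{-1}\sum_{k=0}^n a_{n,k}\mu_k .
\]
Induction on $n$ then shows that the map $\mu_0\mapsto(\mu_n)$ is a linear bijection from $\mathbb C$ onto the solution space. This gives dimension one, and fixing $\mu_0=1$ leaves exactly one solution. No convergence issue arises, because each equation involves finitely many moments.

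\textbf{Second part.} Suppose $d_m=0$ and $d_n\ne0$ for $n\ne m$.
\begin{enumerate}
\item For $n<m$, the equations determine $\mu_1,\ldots,\mu_m$ linearly from $\mu_0$, as above.
\item The $m$th equation becomes the constraint $\sum_{k=0}^m a_{m,k}\mu_k=0$. It involves only $\mu_0,\ldots,\mu_m$, which are multiples of $\mu_0$, so it reads $c\,\mu_0=0$ for a constant $c$ determined by $X$ and $(\phi,\psi)$. This equation does not contain $\mu_{m+1}$.
\item For $n>m$, the equations determine $\mu_{n+1}$ from $\mu_0,\ldots,\mu_n$. So once $\mu_0$ and $\mu_{m+1}$ are chosen, the whole sequence is fixed, linearly in the pair $(\mu_0,\mu_{m+1})$.
\end{enumerate}
Hence the solution space embeds linearly into $\mathbb C^2$, with image $\{(\mu_0,\mu_{m+1}):c\,\mu_0=0\}$.

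The key step is to show $c=0$, and this is exactly where the hypothesis on existence is used. A mass-one solution has $\mu_0=1$ and satisfies the constraint, so $c=c\,\mu_0=0$. The constraint is therefore vacuous, and the image is all of $\mathbb C^2$, giving dimension exactly two. After normalising $\mu_0=1$, the moments $\mu_1,\ldots,\mu_m$ are fixed, $\mu_{m+1}$ is free, and all later moments follow from it.

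\textbf{Mass-one solutions.} These form an affine line parametrised by $\mu_{m+1}$. Two linearly independent mass-one solutions $\mathbf u^{(0)},\mathbf u^{(1)}$ must have different values of $\mu_{m+1}$. Otherwise they would have identical moment sequences and so be equal, contradicting independence. Consider the combination $\mathbf u_\alpha$ in \eqref{eq:family-mixture}. It is a solution of mass one, and its $(m+1)$st moment is affine in $\alpha$ with nonzero slope. So $\alpha$ can be chosen to match any prescribed value of $\mu_{m+1}$. By the uniqueness just established, every mass-one solution equals some $\mathbf u_\alpha$.

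The only delicate point is the consistency of the degenerate $m$th equation. Everything else is routine triangular elimination.
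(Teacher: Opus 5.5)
Your proposal is correct and follows the paper's proof essentially step by step: you use triangular elimination, reduce the degenerate $m$th equation to a scalar multiple of $\mu_0$ that the mass-one solution forces to vanish, and then choose $\mu_0$ and $\mu_{m+1}$ freely. The only difference is the final step, where you parametrise the mass-one solutions by $\mu_{m+1}$ and match it affinely in $\alpha$; the paper instead notes that two independent solutions form a basis and that a combination has mass one exactly when its coefficients sum to one.
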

\begin{proof}
If every $d_n$ is non-zero, choose $\mu_0$ and solve
\eqref{eq:triangular-moments} in order. Each equation determines
one new moment. The sequence defines exactly one functional
on $\mathcal P$, since every polynomial is a finite linear
combination of monomials.

If only $d_m$ vanishes, the first $m$ equations determine
$\mu_1,\ldots,\mu_m$ as multiples of $\mu_0$. The equation
at $n=m$ then leaves a scalar multiple of $\mu_0$ equal to zero.
A solution with $\mu_0=1$ exists, so that scalar is zero.
The equation places no restriction on $\mu_0$ or $\mu_{m+1}$.
Every later equation again determines one new moment. These
two choices therefore determine exactly one functional, and
the solution space has dimension two.

Two independent solutions form a basis. If both have mass one,
their combination has mass one exactly when the coefficients
sum to one. This gives \eqref{eq:family-mixture}.
\end{proof}

\begin{example}[All para--Krawtchouk solutions]\label{ex:PK-moments}
Return to the pair in Example~\ref{ex:PK-masses}. For an affine
map, $D(x^n)$ has leading coefficient $n$ and $S(x^n)$ has
leading coefficient one. Writing $\phi_2=[x^2]\phi$ and
$\psi_1=[x]\psi$, we have $d_n=\phi_2n+\psi_1$, as in
\cite[Theorem~7.8]{C26}. This leading-coefficient identity does
not require every $d_n$ to be non-zero. Here $\phi_2=-1/m$ and $\psi_1=1$, so
\begin{equation}
 d_n=-\frac nm+1,\quad n\ge0,\quad
 d_{N-1}=0,\quad d_n\ne0,\quad n\ne N-1.
\end{equation}
The two finite sums already constructed show that a mass-one
solution exists, so the moment equation at $n=N-1$ is compatible.
Proposition~\ref{prop:free-moment} now proves that these sums give
all solutions. Every normalised solution is
\[
 \mathbf u_\alpha
 =\frac{1-\alpha}{Z_0}\sum_{k=0}^rW_k^{(0)}\delta_{2k}
  +\frac{\alpha}{Z_1}\sum_{k=0}^{\ell}W_k^{(1)}\delta_{2k+\gamma}.
\]
In particular, the two components have the same moments through
degree $N-1$. Their moments of degree $N$ differ; otherwise the
remaining triangular equations would make the functionals equal.
The free moment is therefore $\mu_N$, and changing $\alpha$
changes it. No new map or structural pair is needed.

For $0<\alpha<1$, all $N+1$ weights are positive, so the
functional is regular of order $N+1$. At $\alpha=0,1$ only
one component remains. We shall now determine the recurrence
for the full combination.
\end{example}

\begin{remark}
The conditions $d_m=0$ and $d_n\ne0$ for $n\ne m$ are hypotheses of
Proposition~\ref{prop:free-moment}; the degree bounds on
$\phi,\psi$ alone do not impose them. For the four specific pairs
in Table~\ref{tab:structural-location},
\eqref{eq:four-dn-lambda} verifies that the only vanishing
coefficient is $d_{N-1}$, assuming that $q$ is not a root of
unity in the $q$-cases. Once a mass-one solution exists, the
proposition gives a two-dimensional solution space and a second
independent mass-one solution. Finite support does not follow
from this dimension count; in the applications it is proved by
the endpoint construction. The alternating calculus can leave several moments free
and need not satisfy these hypotheses.
\end{remark}

\subsection{Recovering the finite recurrence}

We now pass from global structural solutions to finite
orthogonality. The following argument applies to any two finite
functionals with the stated hypotheses, whether or not they satisfy
a structural equation. Let the two
normalised functionals in \eqref{eq:two-components} be regular
of orders $r+1$ and $\ell+1$. This means
$\Delta_n^{(0)}\ne0$ for $0\le n\le r$ and
$\Delta_n^{(1)}\ne0$ for $0\le n\le\ell$, where $\Delta_n^{(i)}$
is the Hankel determinant of the moments of $\mathbf u^{(i)}$.
Their supports $E_0,E_1$ contain
$N+1=r+\ell+2$ distinct nodes altogether. Assume that their
moments agree through degree $N-1$, and that their sizes are
balanced as in \eqref{eq:r-ell}. Put $j=\lfloor N/2\rfloor$ and
take $\alpha\in\mathbb C\setminus\{0,1\}$. The endpoint
construction and Proposition~\ref{prop:free-moment} provide such common
moments in the four applications, but are not assumptions of the
recurrence result itself.

Write $Q_n^{(i)}$ for the component monic polynomials, including
their terminal polynomials, and $B_n^{(i)},C_n^{(i)}$ for their
recurrence coefficients. The coefficients of a monic orthogonal
polynomial of degree $n$ are determined by moments through degree
$2n-1$. The component regularity ensures that the determining
system is nonsingular, so
\[
 Q_n^{(0)}=Q_n^{(1)}=:Q_n,\quad 0\le n\le j.
\]
The norms use moments through degree $2n$, and the diagonal
coefficients use moments through degree $2n+1$. Thus the
common recurrence coefficients have the following ranges:
\begin{subequations}\label{eq:common-recurrence-ranges}
\begin{equation}
 \begin{aligned}
 B_n^{(0)}=B_n^{(1)}&=:B_n,\quad 0\le n<j,\\[7pt]
 C_n^{(0)}=C_n^{(1)}&=:C_n,\quad 1\le n\le j,
 \end{aligned}
 \quad N=2j+1;
\end{equation}
\begin{equation}
 \begin{aligned}
 B_n^{(0)}=B_n^{(1)}&=:B_n,\quad 0\le n<j,\\[7pt]
 C_n^{(0)}=C_n^{(1)}&=:C_n,\quad 1\le n<j,
 \end{aligned}
 \quad N=2j.
\end{equation}
\end{subequations}
In the even case, $Q_j$ is the smaller component's terminal
polynomial. Let
\[
 \Pi_i(x)=\prod_{z\in E_i}(x-z),\quad
 h_k=\langle\mathbf u^{(0)},Q_k^2\rangle
     =\langle\mathbf u^{(1)},Q_k^2\rangle\ne0,
 \quad 0\le k\le\ell.
\]
We shall construct the polynomials $P_n$ of the combination
$\mathbf u_\alpha$ and their squared norms
$H_n=\langle\mathbf u_\alpha,P_n^2\rangle$.

For $N=2j+1$, the terminal component recurrences are
\[
 \Pi_i=(x-b^{(i)})Q_j-C_jQ_{j-1},\quad
 b^{(i)}=B_j^{(i)},\quad i=0,1.
\]
Orthogonality to $Q_j$ fixes
$B_j=(1-\alpha)b^{(0)}+\alpha b^{(1)}$, hence
\begin{equation}\label{eq:construct-first-odd}
 P_{j+1}=(1-\alpha)\Pi_0+\alpha\Pi_1.
\end{equation}
Writing $K=b^{(1)}-b^{(0)}$, subtraction gives
$\Pi_1-\Pi_0=-KQ_j$, with $K\ne0$ because the supports are
disjoint. Thus $P_{j+1}$ equals $-\alpha KQ_j$ on $E_0$ and
$(1-\alpha)KQ_j$ on $E_1$.

For either parity, let $R_{N-k}$ be the unique interpolating
polynomial of degree at most $N$ with
\begin{equation}\label{eq:interpolated-polynomials}
 R_{N-k}(x)=
 \begin{cases}
 -\alpha Q_k(x),&\text{if }x\in E_0,\\[7pt]
 (1-\alpha)Q_k(x),&\text{if }x\in E_1,
 \end{cases}
 \quad 0\le k\le\ell.
\end{equation}
If $\deg p<N-k$, then $\deg(Q_kp)\le N-1$. The common
moments therefore give
\[
 \langle\mathbf u_\alpha,R_{N-k}p\rangle
 =\alpha(1-\alpha)
  \langle\mathbf u^{(1)}-\mathbf u^{(0)},Q_kp\rangle=0.
\]
The next lemma proves the indicated degrees and regularity;
neither is assumed for the combination.

\begin{lemma}\label{lem:combined-polynomials}
Under the assumptions above, $\mathbf u_\alpha$ is regular
of order $N+1$ for every $\alpha\ne0,1$. Its monic orthogonal
polynomials satisfy
\[
 P_n=Q_n,\quad 0\le n\le j.
\]
Put
\[
 M_N=\langle\mathbf u^{(1)}-\mathbf u^{(0)},x^N\rangle.
\]
Then $M_N\ne0$, $\deg R_{N-k}=N-k$, and
\begin{equation}\label{eq:construct-remaining}
 P_{N-k}=\frac{M_N}{h_k}R_{N-k},
 \quad
 H_{N-k}=\alpha(1-\alpha)\frac{M_N^2}{h_k},
 \quad 0\le k\le\ell.
\end{equation}
The terminal polynomial is $P_{N+1}=\Pi_0\Pi_1$.
\end{lemma}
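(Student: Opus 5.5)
We construct an explicit orthogonal basis of $\mathcal P_N$ (polynomials of degree at most $N$) for $\mathbf u_\alpha$ and read regularity off its squared norms. Since $\mathbf u_\alpha$ is supported on the $N+1$ distinct nodes $E_0\cup E_1$, it is regular of order $N+1$ if and only if the monic polynomials $P_0,\dots,P_N$ exist with $H_n\ne0$. This in turn holds if and only if the Gram matrix of any basis of $\mathcal P_N$ adapted to degree is nonsingular with non-zero leading minors. So it suffices to exhibit monic polynomials $P_n$ of each degree $n\le N$, mutually $\mathbf u_\alpha$-orthogonal, with $\langle\mathbf u_\alpha,P_n^2\rangle\ne0$. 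Uniqueness then identifies them as the orthogonal polynomials.

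For the lower half we take $P_n=Q_n$ for $n\le j$. Products $Q_nQ_m$ with $n,m\le j$ have degree at most $2j\le N$, but $2j=N$ can occur in the even case only when $n=m=j$. In the odd case $2j=N-1$, so every such moment is common, and $\langle\mathbf u_\alpha,Q_nQ_m\rangle=h_n\delta_{nm}$ with $h_n\ne0$ for $n\le j\le\ell$. In the even case $N=2j$ we have $\ell=j-1$, so the index $j$ coincides with $N-k$ for $k=\ell$. Here $Q_j$ is the terminal polynomial of the smaller component, and it is covered by the upper-half formula rather than by $h_j$. I would check that the two index ranges $0\le n\le j$ and $N-\ell\le n\le N$ cover $0,\dots,N$, overlapping exactly at $n=j$ when $N$ is even. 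That overlap yields $R_{j}\propto Q_j$ consistently: $Q_j=\Pi_1$ vanishes on $E_1$, and on $E_0$ it is proportional to $-\alpha Q_j$. For odd $N$, the index $j+1=N-\ell$ is matched by \eqref{eq:construct-first-odd}.

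For the upper half, the computation displayed before the lemma already shows that $\langle\mathbf u_\alpha,R_{N-k}p\rangle=0$ whenever $\deg p<N-k$. The key step, and the main obstacle, is to show $\deg R_{N-k}=N-k$ exactly and to compute its leading coefficient. My plan is to pair against $x^{N-k}$ via the complementary weight structure. We have
\[
\langle\mathbf u_\alpha,R_{N-k}Q_k\rangle=\alpha(1-\alpha)\langle\mathbf u^{(1)}-\mathbf u^{(0)},Q_k^2\rangle,
\]
which is $0$ because $2k\le N-1$. Instead, take $p$ monic of degree $N-k$ with $p=Q_kx^{N-2k}+\cdots$. Then
\[
\langle\mathbf u_\alpha,R_{N-k}\,x^{N-2k}Q_k\rangle=\alpha(1-\alpha)\langle\mathbf u^{(1)}-\mathbf u^{(0)},x^{N-2k}Q_k^2\rangle=\alpha(1-\alpha)M_N,
\]
since only the top moment differs. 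Nonvanishing of $M_N$ comes from the disjoint supports. If $M_N=0$, all moments of $\mathbf u^{(0)}$ and $\mathbf u^{(1)}$ through degree $N$ would coincide. Then $\mathbf u^{(1)}-\mathbf u^{(0)}$ would be a functional on $N+1$ distinct points annihilating $\mathcal P_N$, which forces all weights to vanish. That contradicts the non-zero weights of the normalised components, which follow from $h_0=1$ and the regularity of orders $r+1,\ell+1$ together with the support sizes.

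To get the degree, I would use a Gram-matrix or duality argument. The space $V_k=\{R\in\mathcal P_N:\langle\mathbf u_\alpha,Rp\rangle=0\ \text{for all}\ \deg p<N-k\}$ has dimension $k+1$ by nondegeneracy of the bilinear form on $\mathcal P_N$ (weights non-zero, nodes distinct). It contains $Q_0,\dots$? No; rather, it contains $R_{N-k}$, and $R_{N-k}$ pairs non-trivially with degree-$(N-k)$ test polynomials. If $\deg R_{N-k}<N-k$, pairing $R_{N-k}$ with itself would vanish. Computing instead $\langle\mathbf u_\alpha,R_{N-k}^2\rangle$ directly from the node values gives $\alpha(1-\alpha)\langle(1-\alpha)\mathbf u^{(1)}-\alpha\mathbf u^{(0)}\ldots\rangle$, and I expect it to reduce to $\alpha(1-\alpha)M_N\,c_k$, where $c_k$ is the leading coefficient. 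Comparing the two evaluations forces $c_k=h_k/M_N\ne0$; this is the delicate bookkeeping step. It gives $P_{N-k}=(M_N/h_k)R_{N-k}$ and $H_{N-k}=\alpha(1-\alpha)M_N^2/h_k\ne0$.

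Finally, pairwise orthogonality between the two halves follows from the degree condition already proved, and between distinct $R$'s likewise. All $H_n$ are non-zero for $\alpha\ne0,1$, so regularity of order $N+1$ follows. The terminal polynomial is the monic product $\Pi_0\Pi_1$ over the full support.
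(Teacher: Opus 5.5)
Your outline follows the paper's skeleton: common $Q_n$ for $n\le j$, interpolants $R_{N-k}$ for the upper half, and pairing with a monic polynomial of degree $N-k$ to produce $\alpha(1-\alpha)M_N$. Your separate proof that $M_N\ne0$, via the $(N+1)$-point Vandermonde argument for $\mathbf u^{(1)}-\mathbf u^{(0)}$, is valid; the paper instead reads it off from $h_k=a_{N-k}M_N$.

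\textbf{Main gap.} The step you identify as the main obstacle is never carried out: nothing in your argument shows $\deg R_{N-k}\le N-k$. The interpolant is only known to have degree at most $N$, and your self-pairing argument gives only the lower bound $\deg R_{N-k}\ge N-k$. The comparison $\langle\mathbf u_\alpha,R_{N-k}^2\rangle=c_k\,\alpha(1-\alpha)M_N$ presupposes the exact degree. A term $x^i$ with $i>N-k$ would contribute $\alpha(1-\alpha)\langle\mathbf u^{(1)}-\mathbf u^{(0)},Q_kx^i\rangle$, which involves moments above degree $N$ that you do not control. The count $\dim V_k=k+1$ does not locate the degree either. Your closing claim that distinct $R$'s are orthogonal by ``the degree condition already proved'' is circular.

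The missing idea is to compute the Gram matrix of the interpolants directly from their node values. Since $\deg(Q_kQ_t)\le2\ell\le N-1$,
\[
 \langle\mathbf u_\alpha,R_{N-k}R_{N-t}\rangle
 =(1-\alpha)\alpha^2\langle\mathbf u^{(0)},Q_kQ_t\rangle
 +\alpha(1-\alpha)^2\langle\mathbf u^{(1)},Q_kQ_t\rangle
 =\alpha(1-\alpha)h_k\delta_{k,t}.
\]
The squared values produce a sum here, not the difference in your sketch. This same computation also supplies the non-zero norm your lower bound needs. Then argue downward. First, $\deg R_N=N$. If $R_{d+1},\ldots,R_N$ have exact degrees, write $R_d=\sum_{n>d}c_nR_n+T$ with $\deg T\le d$. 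Pairing with each $R_n$ forces $c_n=0$, so $\deg R_d\le d$, with equality by the non-zero norm. Only after this does your leading-coefficient comparison give $c_k=h_k/M_N$ and the stated formulas for $P_{N-k}$ and $H_{N-k}$.

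\textbf{Even-case indexing error.} When $N=2j$ one has $\ell=j-1$, so $N-\ell=j+1$, exactly as for odd $N$. The ranges $0\le n\le j$ and $N-\ell\le n\le N$ are therefore disjoint in both parities; they do not overlap at $j$. Thus $P_j=Q_j$ is not covered by the upper-half formula, which at $k=\ell$ gives $P_{j+1}$. Nor can that formula be extended to $k=j$: $Q_j^2$ has degree $N$, and $\langle\mathbf u^{(1)},Q_j^2\rangle=0\ne\langle\mathbf u^{(0)},Q_j^2\rangle$, so there is no common $h_j$.

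This norm needs its own computation. Since $Q_j=\Pi_1$ vanishes on $E_1$, $H_j=(1-\alpha)\langle\mathbf u^{(0)},Q_j^2\rangle$. This is non-zero because $\mathbf u^{(0)}$ is regular of order $r+1=j+1$ and $\alpha\ne1$. As written, your proposal never shows $H_j\ne0$ for even $N$, so regularity of order $N+1$ is not established in that case.
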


\begin{proof}
The displayed orthogonality and the prescribed values give
\[
 \langle\mathbf u_\alpha,R_{N-k}R_{N-t}\rangle
 =\alpha(1-\alpha)h_k\delta_{k,t},
 \quad 0\le k,t\le\ell.
\]
Starting at $R_N$, its non-zero norm and orthogonality to all
lower degrees force degree $N$: a lower degree would make it
orthogonal to itself. Suppose the degrees of
$R_{d+1},\ldots,R_N$ have been established. Subtracting their
leading terms writes $R_d=\sum_{n>d}c_nR_n+T$, with
$\deg T\le d$. Each $R_n$ is orthogonal to $T$ and to $R_d$.
Its non-zero norm therefore forces $c_n=0$. The norm of $R_d$
then forces
$\deg R_d=d$. This proves all the claimed degrees.

Since $Q_kx^{N-k}$ is monic of degree $N$,
\[
 \langle\mathbf u_\alpha,R_{N-k}x^{N-k}\rangle
 =\alpha(1-\alpha)M_N.
\]
If $a_{N-k}$ is the leading coefficient, comparison with its
squared norm gives $h_k=a_{N-k}M_N$. The terms below degree
$N-k$ disappear by orthogonality. Since $h_k\ne0$, we have
$M_N\ne0$, and
\eqref{eq:construct-remaining} follows. The remaining lower
norms are $H_k=h_k$ for $k\le\ell$; when $N=2j$,
\[
 H_j=(1-\alpha)\langle\mathbf u^{(0)},Q_j^2\rangle\ne0.
\]
The monic change of basis has determinant one, so
$\Delta_n=\prod_{k=0}^{n}H_k\ne0$ for $0\le n\le N$.
Thus the functional is regular of order $N+1$, and
$\Pi_0\Pi_1$, which vanishes on the entire
support, is its terminal polynomial.
\end{proof}

Formula \eqref{eq:construct-remaining} gives the remaining
polynomials and their norms. We now use it to compute the
central recurrence coefficients and obtain the reflected coefficients
in Proposition~\ref{prop:central-block}.

\begin{proposition}\label{prop:central-block}
Let the two components in \eqref{eq:two-components} have mass
one, be regular of orders $r+1,\ell+1$, and have the same moments
through degree $N-1$, where $N=r+\ell+1$. Assume their $N+1$
nodes are distinct and $r,\ell$ are as in \eqref{eq:r-ell}.
For $\alpha\in\mathbb C\setminus\{0,1\}$, the functional
$\mathbf u_\alpha$ is regular of order $N+1$. Its recurrence
coefficients are given as follows.

If $N=2j+1$, retain the common coefficients
$B_0,\ldots,B_{j-1}$ and $C_1,\ldots,C_j$. Put
$b^{(i)}=B_j^{(i)}$ for $i=0,1$. The central coefficients are
\[
 \begin{aligned}
 B_j&=(1-\alpha)b^{(0)}+\alpha b^{(1)},\\[7pt]
 B_{j+1}&=\alpha b^{(0)}+(1-\alpha)b^{(1)},\\[7pt]
 C_{j+1}&=\alpha(1-\alpha)(b^{(1)}-b^{(0)})^2.
 \end{aligned}
\]
The remaining entries satisfy
\[
 \begin{aligned}
 B_{N-n}&=B_n,\quad 0\le n<j,\\[7pt]
 C_{N+1-n}&=C_n,\quad 1\le n\le j.
 \end{aligned}
\]

If $N=2j$, retain $B_0,\ldots,B_{j-1}$ and
$C_1,\ldots,C_{j-1}$. Put $b^{(0)}=B_j^{(0)}$ and
$c^{(0)}=C_j^{(0)}$. The central coefficients are
\[
 B_j=b^{(0)},\quad
 C_j=(1-\alpha)c^{(0)},\quad
 C_{j+1}=\alpha c^{(0)}.
\]
The remaining entries satisfy
\[
 \begin{aligned}
 B_{N-n}&=B_n,\quad 0\le n<j,\\[7pt]
 C_{N+1-n}&=C_n,\quad 1\le n<j.
 \end{aligned}
\]
\end{proposition}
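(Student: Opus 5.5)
Regularity of order $N+1$ and the identity $P_n=Q_n$ for $n\le j$ are already given by Lemma~\ref{lem:combined-polynomials}. The plan is to read off the recurrence coefficients from $P_n$ and $H_n$, using $C_n=H_n/H_{n-1}$ and $B_n=\langle\mathbf u_\alpha,xP_n^2\rangle/H_n$. The common ranges \eqref{eq:common-recurrence-ranges} give the retained coefficients directly. For $n\le j$ we have $P_n=Q_n$, and the moments of $\mathbf u_\alpha$ through degree $N-1$ agree with the common ones. So $B_n$ and $C_n$ in the listed ranges coincide with the component values. This works because they only involve moments of degree at most $2n+1\le N-1$, respectively at most $2n\le N-1$, in those ranges.

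For the reflected entries, use the norms in \eqref{eq:construct-remaining}. For $N-k-1\ge N-\ell$, that is $k+1\le\ell$, we get
\[
 C_{N-k}=\frac{H_{N-k}}{H_{N-k-1}}=\frac{h_{k+1}}{h_k}=C_{k+1}.
\]
Reindexing gives $C_{N+1-n}=C_n$ in the stated range. For the diagonal, $P_{N-k}$ is a multiple of $R_{N-k}$, so
\[
 B_{N-k}=\frac{\langle\mathbf u_\alpha,xR_{N-k}^2\rangle}{\langle\mathbf u_\alpha,R_{N-k}^2\rangle}.
\]
By the prescribed values, the numerator is $\alpha(1-\alpha)\langle\mathbf u^{(1)}-\mathbf u^{(0)},xQ_k^2\rangle\cdot(\text{sign bookkeeping})$. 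More precisely, it equals $\alpha^2\langle\mathbf u^{(0)}\cdot\rangle$ weighted by $1-\alpha$ plus $(1-\alpha)^2\alpha\langle\mathbf u^{(1)}\cdot\rangle$. This is $\alpha(1-\alpha)[\alpha\langle\mathbf u^{(0)},xQ_k^2\rangle+(1-\alpha)\langle\mathbf u^{(1)},xQ_k^2\rangle]$. For $k<j$ the degree $2k+1\le N-1$, so both pairings equal $B_kh_k$, and $B_{N-k}=B_k$.

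The central coefficients come from the same formula. In the odd case $N=2j+1$ we have $\ell=j$. Orthogonality of $P_{j+1}=(1-\alpha)\Pi_0+\alpha\Pi_1$ to $Q_j$ gives $B_j$, as already noted before Lemma~\ref{lem:combined-polynomials}. Taking $k=j$, so that $N-k=j+1$, the computation above yields
\[
 B_{j+1}=\alpha\langle\mathbf u^{(0)},xQ_j^2\rangle/h_j+(1-\alpha)\langle\mathbf u^{(1)},xQ_j^2\rangle/h_j=\alpha b^{(0)}+(1-\alpha)b^{(1)}.
\]
For the off-diagonal coefficient, $C_{j+1}=H_{j+1}/h_j$ with $H_{j+1}=\alpha(1-\alpha)M_N^2/h_j$. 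The key identity to prove is $M_N=Kh_j$ up to sign, where $K=b^{(1)}-b^{(0)}$. It follows from $\Pi_1-\Pi_0=-KQ_j$: pair $\mathbf u^{(1)}-\mathbf u^{(0)}$ against $x^{j}Q_j$. Then $\langle\mathbf u^{(1)},\Pi_0x^j\rangle=-K\langle\mathbf u^{(1)},Q_jx^j\rangle=-Kh_j$, which vanishes in the other component. This gives $M_N^2=K^2h_j^2$ and hence $C_{j+1}=\alpha(1-\alpha)K^2$.

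In the even case $N=2j$ we have $\ell=j-1$, and $E_1$ is the smaller support with terminal polynomial $Q_j$. Then $H_j=(1-\alpha)h^{(0)}_j$, so $C_j=(1-\alpha)c^{(0)}$. Next, $B_j=\langle\mathbf u_\alpha,xQ_j^2\rangle/H_j$ reduces to the $\mathbf u^{(0)}$ term, because $Q_j$ vanishes on $E_1$; this gives $b^{(0)}$. Finally $C_{j+1}=H_{j+1}/H_j$ with $H_{j+1}=\alpha(1-\alpha)M_N^2/h_{j-1}$. Here one needs $M_N^2=h_{j-1}h^{(0)}_j$, obtained by pairing against $x^{j}Q_j$, which again only sees $E_0$; then $C_{j+1}=\alpha c^{(0)}$. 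The main obstacle is exactly these two evaluations of $M_N$ in terms of component data, including the signs. I would handle both by writing $x^N=x^{j}\Pi_i+\text{lower}$ and using orthogonality on each component separately.
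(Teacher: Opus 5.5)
Your route is the paper's. Regularity and the norms $H_{N-k}$ come from Lemma~\ref{lem:combined-polynomials}, and the entries from $B_n=\langle\mathbf u_\alpha,xP_n^2\rangle/H_n$ and $C_n=H_n/H_{n-1}$. The reflections come from the prescribed values of $R_{N-k}$; keep only your second expression for $\langle\mathbf u_\alpha,xR_{N-k}^2\rangle$, since the ``sign bookkeeping'' version is wrong. The central entries come from $M_N$. Everything except the two evaluations of $M_N$ is correct, and those need repair.

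\textbf{Odd case.} Pairing $\mathbf u^{(1)}-\mathbf u^{(0)}$ with $x^jQ_j$ gives $0$, since $\deg x^jQ_j=N-1$. The monic degree-$N$ test polynomial is $x^j\Pi_0$, which is what you actually compute with. Since $\Pi_0=\Pi_1+KQ_j=KQ_j$ on $E_1$, one gets $M_N=K\langle\mathbf u^{(1)},x^jQ_j\rangle=+Kh_j$. The sign does not affect $C_{j+1}$. The paper tests with $xQ_j^2$ instead and reads off $M_N=(b^{(1)}-b^{(0)})h_j$ directly.

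\textbf{Even case.} This is a step that fails as written. The identity you say is needed, $M_N^2=h_{j-1}h_j^{(0)}$, is false. Even if it held, it would give $C_{j+1}=\alpha$, not $\alpha c^{(0)}$. Your own test polynomial gives the right value: $x^jQ_j$ is monic of degree $N=2j$ and vanishes on $E_1$, so
\[
 M_N=-\langle\mathbf u^{(0)},x^jQ_j\rangle=-h_j^{(0)}=-c^{(0)}h_{j-1},
\]
and therefore
\[
 C_{j+1}=\frac{H_{j+1}}{H_j}
 =\frac{\alpha(1-\alpha)\bigl(h_j^{(0)}\bigr)^2/h_{j-1}}{(1-\alpha)h_j^{(0)}}
 =\alpha\frac{h_j^{(0)}}{h_{j-1}}=\alpha c^{(0)}.
\]
This is the paper's computation, which uses $Q_j^2$ in place of $x^jQ_j$. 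With these two corrections the proposal is a complete proof.
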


\begin{proof}
Lemma~\ref{lem:combined-polynomials} proves regularity and
identifies the polynomials and their norms. The first recurrence
coefficients are already fixed by
\eqref{eq:common-recurrence-ranges}. To compute the remaining
ones, orthogonality and \eqref{eq:TTRR} give
\[
 \begin{aligned}
 B_n&=\frac{\langle\mathbf u_\alpha,xP_n^2\rangle}{H_n},
 \quad 0\le n\le N,\\[7pt]
 C_n&=\frac{H_n}{H_{n-1}},\quad 1\le n\le N.
 \end{aligned}
\]

First suppose $N=2j+1$. We have $H_j=h_j$, and the construction
leading to \eqref{eq:construct-first-odd} already gives
\[
 B_j=(1-\alpha)b^{(0)}+\alpha b^{(1)}.
\]
Put $K=b^{(1)}-b^{(0)}$. Since $xQ_j^2$ is monic of degree
$N$ and the lower moments of the two components agree,
\[
 M_N
 =\langle\mathbf u^{(1)}-\mathbf u^{(0)},xQ_j^2\rangle
 =(b^{(1)}-b^{(0)})h_j
 =Kh_j.
\]
Formula \eqref{eq:construct-remaining}, with $k=j$, therefore
gives
\[
 H_{j+1}=\alpha(1-\alpha)K^2h_j,
 \quad
 C_{j+1}=\frac{H_{j+1}}{H_j}
        =\alpha(1-\alpha)K^2.
\]
The values of $P_{j+1}$ on the two supports give
\begin{align*}
 \langle\mathbf u_\alpha,xP_{j+1}^2\rangle
 ={}&(1-\alpha)\alpha^2K^2
       \langle\mathbf u^{(0)},xQ_j^2\rangle\\[7pt]
 &+\alpha(1-\alpha)^2K^2
       \langle\mathbf u^{(1)},xQ_j^2\rangle\\[7pt]
 ={}&\alpha(1-\alpha)K^2h_j
       \bigl(\alpha b^{(0)}+(1-\alpha)b^{(1)}\bigr).
\end{align*}
Dividing by $H_{j+1}$ gives the stated value of $B_{j+1}$.

Now suppose $N=2j$. Here $P_j=Q_j=\Pi_1$, so $Q_j$
vanishes on the smaller support. The last row of the larger
component is
\[
 \Pi_0=(x-b^{(0)})Q_j-c^{(0)}Q_{j-1}.
\]
Its last non-zero norm is
\[
 \langle\mathbf u^{(0)},Q_j^2\rangle
 =c^{(0)}h_{j-1}.
\]
Consequently,
\[
 H_j=(1-\alpha)c^{(0)}h_{j-1},
 \quad
 \langle\mathbf u_\alpha,xQ_j^2\rangle=b^{(0)}H_j.
\]
Since $H_{j-1}=h_{j-1}$, these equalities give
\[
 B_j=b^{(0)},\quad C_j=(1-\alpha)c^{(0)}.
\]
Also, $Q_j^2$ is monic of degree $N$, and hence
\[
 M_N
 =\langle\mathbf u^{(1)}-\mathbf u^{(0)},Q_j^2\rangle
 =-c^{(0)}h_{j-1}.
\]
Using \eqref{eq:construct-remaining} with $k=j-1$, we obtain
\[
 H_{j+1}
 =\alpha(1-\alpha)(c^{(0)})^2h_{j-1},
 \quad
 C_{j+1}=\frac{H_{j+1}}{H_j}=\alpha c^{(0)}.
\]

It remains to recover the reflected entries. For either parity
and $0\le k<j$, the common component coefficient gives
\[
 \langle\mathbf u^{(i)},xQ_k^2\rangle=B_kh_k,
 \quad i=0,1.
\]
By \eqref{eq:interpolated-polynomials} and
\eqref{eq:construct-remaining},
\begin{align*}
 \langle\mathbf u_\alpha,xP_{N-k}^2\rangle
 ={}&\frac{M_N^2}{h_k^2}
 \bigl((1-\alpha)\alpha^2
       \langle\mathbf u^{(0)},xQ_k^2\rangle
       +\alpha(1-\alpha)^2
       \langle\mathbf u^{(1)},xQ_k^2\rangle\bigr)\\[7pt]
 ={}&\alpha(1-\alpha)\frac{M_N^2}{h_k}B_k
 =H_{N-k}B_k.
\end{align*}
Thus $B_{N-k}=B_k$.

For $0\le k<\ell$, both consecutive norms are given by
\eqref{eq:construct-remaining}, so
\[
 C_{N-k}
 =\frac{H_{N-k}}{H_{N-k-1}}
 =\frac{h_{k+1}}{h_k}
 =C_{k+1}.
\]
These are exactly the claimed reflections. The terminal
polynomial was already identified in
Lemma~\ref{lem:combined-polynomials}.
\end{proof}

The last component entries can also be computed directly
from their nodes. Put
\[
 s_i=\sum_{x\in E_i}x,\quad
 t_i=\sum_{x\in E_i}x^2,\quad i=0,1.
\]
For odd $N$, the coefficient of $x^{j-1}$ in $Q_j$ is
$-\sum_{n=0}^{j-1}B_n$. Comparing the next-to-leading
coefficients in the terminal recurrences gives
\begin{equation}\label{eq:central-node-sums-odd}
 b^{(0)}=s_0-\sum_{n=0}^{j-1}B_n,\quad
 b^{(1)}-b^{(0)}=s_1-s_0.
\end{equation}

For even $N$, use
\[
 \Pi_0=(x-b^{(0)})\Pi_1-c^{(0)}Q_{j-1}.
\]
Comparing the coefficients of $x^j$ gives
$b^{(0)}=s_0-s_1$. The sums of pairwise node products are
$(s_i^2-t_i)/2$, with an empty sum equal to zero. Comparing
at degree $j-1$ gives
\[
 \frac{s_0^2-t_0}{2}
 =\frac{s_1^2-t_1}{2}+b^{(0)}s_1-c^{(0)}.
\]
Therefore
\begin{equation}\label{eq:central-node-sums-even}
 b^{(0)}=s_0-s_1,\quad
 c^{(0)}=\frac{t_0-t_1-(b^{(0)})^2}{2}.
\end{equation}

This completes the recurrence construction. The real positive setting,
including the common moments of the two restrictions and central
isospectral deformations, is treated in \cite[Sections~3--4]{GTVZ17}.
Here the component argument works over $\mathbb C$ and proves regularity
for every $\alpha\ne0,1$; Proposition~\ref{prop:free-moment} supplies
the exhaustiveness of the global structural solutions in the four
applications. Only the central coefficients depend on $\alpha$;
the remaining entries follow by reflection. At $\alpha=1/2$ the
recurrence is persymmetric. Positivity requires additional real
parameter conditions.

For the four identifications below, we compute the common recurrence
coefficients from \cite[Theorems~7.5 and~7.8]{C26}, in the ranges
where their denominators and the component Hankel determinants are
non-zero. More precisely, their finite version for degrees
$0,\ldots,M+1$ assumes $d_0,\ldots,d_{2M+1}\ne0$.
For $N=2j+1$, take $M=j-1$, which gives all the common recurrence coefficients in
\eqref{eq:common-recurrence-ranges}. For $N=2j$ and $j\ge2$,
take $M=j-2$; the last common diagonal entry $B_{j-1}$ is
obtained from the same local construction, whose divisors still
precede $d_{2j-1}=0$; see \cite[pp.~39 and~51--52]{C26}.
For $j=1$, $B_0$ follows directly by testing
\eqref{eq:structural} on $1$. Proposition~\ref{prop:central-block}
gives the remaining recurrence coefficients directly from the
component functionals, without applying those formulas at the
excluded indices.

\section{Identification of the para-families}\label{sec:families}
We now verify Table~\ref{tab:structural-location}, with $r,\ell,m$
as in \eqref{eq:r-ell}. Throughout, assume the displayed
expressions are defined, the nodes are distinct, the component
weights and normalising sums are non-zero, and both components
are regular of their finite orders. Take $\alpha\ne0,1$.
For complex parameters, component regularity is a hypothesis;
distinct nodes and non-zero weights do not imply it. In the $q$-cases,
take $q\ne0$ not a root of unity, with the logarithm fixed above.
Endpoint zeros and ratios refer to individually defined $\lambda_\pm$;
rational continuation applies to the complete structural identity.

We identify the full published functionals by their normalised
weights, denoted by $w_k^{\rm pub}$, using the map and structural
pair in the table. Equality of the functionals gives equality of
all monic orthogonal polynomials and recurrence entries, including
the terminal row. Their common entries can be computed directly
from \cite[Theorems~7.5 and~7.8]{C26} in the ranges justified
above; Proposition~\ref{prop:central-block} supplies the remaining coefficients
and their reflection. We retain the full para--Krawtchouk recurrence as
an illustration and refer to the original papers for the other
published lists. In each comparison we retain the parameters and polynomial variable
of the cited source.
Published para--Krawtchouk uses $\alpha=1/2$; the other three
allow a free $\alpha$. For \cite{VZ12,LVZ16,LVZ18}, formula references
use the arXiv versions specified in the bibliography.

For the $q$-products, write
\[
 \left(a;q\right)_k
 =\prod_{i=0}^{k-1}\left(1-aq^i\right),\quad
 \left(a_1,\ldots,a_p;q\right)_k
 =\prod_{i=1}^{p}\left(a_i;q\right)_k.
\]
Empty products are one. We normalise each component by writing
$\omega_k=W_k/Z$, where $W_0=1$ and $Z=\sum_kW_k$,
then multiply its weights by $1-\alpha$ or $\alpha$.

The standard formulas in \cite[Theorems~7.5 and~7.8]{C26} give
\begin{subequations}\label{eq:four-dn-lambda}
\begin{equation}
 d_n=1-\frac{n}{m}
\end{equation}
for para--Krawtchouk and para--Racah, and
\begin{equation}
 d_n=\frac{q^{-m}t^n-t^{-n}}{q^{-m}-1}
\end{equation}
for $q$-para--Racah and $q$-para--Krawtchouk.
\end{subequations}
In each case, $d_{N-1}=0$ is the only zero. Once the two
components are constructed, Proposition~\ref{prop:free-moment}
gives all normalised solutions of the structural equation.
Proposition~\ref{prop:central-block} then gives their full
recurrence and regularity for \mbox{$\alpha\ne0,1$}.

\subsection{Para--Krawtchouk}
The published nodes and normalised weights
\cite[Eqs.~(3.4)--(3.5) and Section~6]{VZ12} are
\[
 x_{2k}=2k,\quad 0\le k\le r,\quad
 x_{2k+1}=2k+\gamma,\quad 0\le k\le\ell,
\]
and
\begin{equation}\label{eq:PK-published-weights}
 \begin{aligned}
 w_{2k}^{\rm pub}
 &=\frac{\left(1-\gamma/2\right)_r}
         {2\left(\ell+1\right)_r}
   \frac{\left(-r,-\gamma/2-\ell\right)_k}
        {k!\left(1-\gamma/2\right)_k},\\[7pt]
 w_{2k+1}^{\rm pub}
 &=\frac{\left(1+\gamma/2\right)_\ell}
         {2\left(r+1\right)_\ell}
   \frac{\left(-\ell,\gamma/2-r\right)_k}
        {k!\left(1+\gamma/2\right)_k}.
 \end{aligned}
\end{equation}
For odd $N$, the monic recurrence coefficients are
\begin{equation}\label{eq:PK-recurrence}
 B_n^{\rm pub}=\frac{N-1+\gamma}{2},\quad
 C_n^{\rm pub}
 =\frac{n\left(N+1-n\right)
        \left(\left(N+1-2n\right)^2-\gamma^2\right)}
        {4\left(N-2n\right)\left(N-2n+2\right)}.
\end{equation}
For even $N$, put $M=N+1$. The weights
\eqref{eq:PK-published-weights} give the monic recurrence
\begin{equation}\label{eq:PK-even-recurrence}
 \begin{aligned}
 B_n^{\rm pub}
 &=\frac{M+\gamma}{2}-1+
   \frac{\left(\gamma-1\right)M}{4}
   \left(\frac1{2n-M}-\frac1{2n+2-M}\right),\\[7pt]
 C_n^{\rm pub}
 &=\frac{n\left(M-n\right)
        \left(\left(2n-M\right)^2-\left(\gamma-1\right)^2\right)}
        {4\left(2n-M\right)^2}.
 \end{aligned}
\end{equation}
These formulas use monic normalisation, with $0\le n\le N$ for
$B_n^{\rm pub}$ and $1\le n\le N$ for $C_n^{\rm pub}$.

Examples~\ref{ex:PK-masses} and~\ref{ex:PK-moments}, with
$\alpha=1/2$, give the weights \eqref{eq:PK-published-weights}.
Equality of these finite functionals identifies every monic polynomial
and recurrence entry, including the terminal row, and proves classicality
for the stated map and structural pair.

\subsection{Para--Racah}
\label{subsec:PR}
The published support \cite{LVZ16} is
\[
 x_{2k}=-\left(a+k\right)^2
       ,\quad 0\le k\le r,\quad
 x_{2k+1}=-\left(c+k\right)^2
       ,\quad 0\le k\le\ell.
\]
To write both components without repeating the same expression,
put
\begin{equation}\label{eq:R-weights-full}
 W_k^{(0)}
 =\frac{a+k}{a}
  \frac{\left(2a,-r,a+c,a-c-\ell\right)_k}
       {k!\left(a-c+1,2a+r+1,a+c+\ell+1\right)_k},
\end{equation}
and
\begin{equation}\label{eq:R-normalisation}
 Z_0=\sum_{k=0}^{r}W_k^{(0)}
 =\frac{\left(2a+1,\ell+1\right)_r}
        {\left(a-c+1,a+c+\ell+1\right)_r}.
\end{equation}
Define $W_k^{(1)},Z_1$ by interchanging
$\left(a,r\right)$ and $\left(c,\ell\right)$.
The published weights are then
\[
 w_{2k}^{\rm pub}
 =(1-\alpha)\frac{W_k^{(0)}}{Z_0},\quad
 w_{2k+1}^{\rm pub}
 =\alpha\frac{W_k^{(1)}}{Z_1},
\]
as in \cite[Eqs.~(3.14), (3.17), and (4.13)]{LVZ16}.

\paragraph{\textbf{Checking the weights.}}
Take $X(s)=-s^2$ and the para--Racah pair in
Table~\ref{tab:structural-location}. Since $Y(s)-Z(s)=-2s$,
substitution in \eqref{eq:lambdas} gives
\[
 \lambda_-(s)
 =\frac{\left(s-a\right)\left(s-c\right)
         \left(s+a+r\right)\left(s+c+\ell\right)}{2ms},
 \quad
 \lambda_+(s)=-\lambda_-(-s).
\]
The lower endpoint zeros are $a,c$, and the upper endpoint
zeros are $a+r,c+\ell$. Moreover,
\[
 \frac{\lambda_+\left(a+k\right)}
      {\lambda_-\left(a+k+1\right)}
 =\frac{W_{k+1}^{(0)}}{W_k^{(0)}}.
\]
Thus Proposition~\ref{lem:one-progression} gives the weights
\eqref{eq:R-weights-full}. Their sum
\eqref{eq:R-normalisation} follows from the standard terminating
${}_5F_4$ identity \cite[Eq.~(1.5.6), terminating specialization]{KLS10}.
Interchanging the parameters gives the
second component. Combining them with coefficients $1-\alpha$
and $\alpha$ therefore recovers the published functional.

\subsection{\texorpdfstring{$q$-para--Racah}{q-para-Racah}}

The published support \cite{LVZ18} is
\[
 x_{2k}=\chi\left(aq^k\right)
       ,\quad 0\le k\le r,\quad
 x_{2k+1}=\chi\left(cq^k\right)
       ,\quad 0\le k\le\ell,
 \quad
 \chi(z)=\frac{z+z^{-1}}2.
\]
As before, write the two components using one product:
\begin{equation}\label{eq:qR-weights-full}
 W_k^{(0)}
 =q^{\left(m+1\right)k}\frac{1-a^2q^{2k}}{1-a^2}
  \frac{\left(a^2,ac,q^{-r},aq^{-\ell}/c;q\right)_k}
       {\left(q,aq/c,a^2q^{r+1},acq^{\ell+1};q\right)_k}.
\end{equation}
Its normalisation is
\begin{equation}\label{eq:qR-normalisation}
 Z_0=\sum_{k=0}^{r}W_k^{(0)}
 =\frac{\left(a^2q,q^{\ell+1};q\right)_r}
        {\left(aq/c,acq^{\ell+1};q\right)_r}.
\end{equation}
Define $W_k^{(1)},Z_1$ by interchanging
$\left(a,r\right)$ and $\left(c,\ell\right)$.
Dividing the published weights by their initial values gives
$W_k^{(0)}$ and $W_k^{(1)}$
\cite[Eqs.~(3.33)--(3.34) and~(4.19)--(4.20)]{LVZ18}.
Their component masses \cite[Eqs.~(3.37) and~(4.22)]{LVZ18}
fix the normalisation:
\[
 w_{2k}^{\rm pub}
 =(1-\alpha)\frac{W_k^{(0)}}{Z_0},\quad
 w_{2k+1}^{\rm pub}
 =\alpha\frac{W_k^{(1)}}{Z_1}.
\]

Put $b=a^{-1}q^{-r}$, $d=c^{-1}q^{-\ell}$ and $E_4=abcd=q^{-m}$.

\paragraph{\textbf{Checking the weights.}}
Take $X(s)=\chi\left(q^s\right)$ and the $q$-para--Racah
pair in Table~\ref{tab:structural-location}. With $z=q^s$,
substitution in \eqref{eq:lambdas} gives
\[
 \lambda_-(s)
 =\frac{\left(z-a\right)\left(z-b\right)
         \left(z-c\right)\left(z-d\right)}
        {2\left(E_4-1\right)z\left(z^2-1\right)},
 \quad
 \lambda_+(s)=-\lambda_-(-s).
\]
The lower endpoint zeros are $z=a,c$, and the upper endpoint
zeros are $z=aq^r,cq^\ell$. On the first progression,
$q^s=aq^k$, direct substitution gives
\[
 \frac{\lambda_+(s)}{\lambda_-\left(s+1\right)}
 =\frac{W_{k+1}^{(0)}}{W_k^{(0)}}.
\]
Thus Proposition~\ref{lem:one-progression} gives
\eqref{eq:qR-weights-full}. The sum
\eqref{eq:qR-normalisation} follows from the standard
terminating ${}_6\phi_5$ identity \cite[Eq.~(1.11.11)]{KLS10}. Interchanging the parameters
gives the second component. Their linear combination therefore
recovers the normalised functional.

\subsection{\texorpdfstring{$q$-para--Krawtchouk}{q-para-Krawtchouk}}

These polynomials arise from $q$-para--Racah by keeping
$\Delta=a/c$ fixed, rescaling its variable by $2/c$, and letting
$c\to\infty$ \cite[Section~5.1]{LVZ18}. Their support is
\[
 x_{2k}=\Delta q^k,\quad 0\le k\le r,\quad
 x_{2k+1}=q^k,\quad 0\le k\le\ell.
\]
Put
\begin{equation}\label{eq:qK-weights-full}
 W_k^{(0)}
 =q^k\frac{\left(q^{-r},\Delta q^{-\ell};q\right)_k}
              {\left(q,\Delta q;q\right)_k},
\end{equation}
and
\begin{equation}\label{eq:qK-normalisation}
 Z_0=\sum_{k=0}^{r}W_k^{(0)}
 =\left(\Delta q^{-\ell}\right)^r
   \frac{\left(q^{\ell+1};q\right)_r}
        {\left(\Delta q;q\right)_r}.
\end{equation}
Define $W_k^{(1)},Z_1$ by replacing
$\left(\Delta,r,\ell\right)$ with
$\left(\Delta^{-1},\ell,r\right)$.
The published weights \cite[Eqs.~(5.14)--(5.18)]{LVZ18} are
\[
 w_{2k}^{\rm pub}
 =\left(1-\alpha\right)\frac{W_k^{(0)}}{Z_0},\quad
 w_{2k+1}^{\rm pub}
 =\alpha\frac{W_k^{(1)}}{Z_1}.
\]

\paragraph{\textbf{Checking the weights.}}
Take $X(s)=q^s$ and the $q$-para--Krawtchouk pair in
Table~\ref{tab:structural-location}. With $z=q^s$,
\eqref{eq:lambdas} gives
\[
 \lambda_-(s)
 =\frac{\left(z-\Delta\right)\left(z-1\right)}
        {\left(q^{-m}-1\right)z},\quad
 \lambda_+(s)
 =\frac{q^{-m}\left(z-\Delta q^r\right)\left(z-q^\ell\right)}
        {\left(q^{-m}-1\right)z}.
\]
These vanish at the required starts and ends. On the first
progression, $q^s=\Delta q^k$, substitution gives
\[
 \frac{\lambda_+(s)}{\lambda_-\left(s+1\right)}
 =\frac{W_{k+1}^{(0)}}{W_k^{(0)}}.
\]
Proposition~\ref{lem:one-progression} therefore gives
\eqref{eq:qK-weights-full}. The sum
\eqref{eq:qK-normalisation} follows from the standard
terminating ${}_2\phi_1$ identity \cite[Eq.~(1.11.5)]{KLS10}.
The parameter replacement
above gives the second component. Their linear combination
recovers the published weights and normalisation.

\subsection{Para--Bannai--Ito}\label{sec:para-BI}

The complementary family follows the alternating reconstruction in
\cite[Proposition~1.3]{CG26b}; a Geronimus inversion gives its
Bannai partner. We prove the finite form below, including the
terminal row. For $X(s)=e^{\pi is}$,
the structural equation is equivalent to
\begin{equation}\label{eq:alternating-annihilation}
 \langle\mathbf u,\phi(x)p(x^2)\rangle=0,\quad
 \langle\mathbf u,\psi(x)p(x^2)\rangle=0
 ,\quad p\in\mathcal P,
\end{equation}
by \cite[Proposition~8.3]{C26}.

Fix $j\ge1$ and use the parameters $a,b,\alpha$ of \cite{PVZ23}.
Write $\mathbf u^{\rm C},\mathbf u^{\rm B}$ for the complementary
and Bannai functionals in \cite{PVZ23}, with truncation indices
$2j$ and $2j+1$, respectively. Both functionals are normalised to mass one. We assume
both systems are regular and their nodes are distinct. Write
$P_n^{\rm C},P_n^{\rm B}$ for their monic polynomials, and use
the same superscripts for their recurrence coefficients. Put
\begin{equation}\label{eq:BI-parameters}
 \begin{gathered}
 \rho=\frac{b-j-1+a}{4},\\[7pt]
 A=\frac{b-j-1-a}{4},\quad\text{if }j\text{ is even},\quad
 A=-\frac{b+j+1+a}{4},\quad\text{if }j\text{ is odd}.
 \end{gathered}
\end{equation}

The published recurrences give
\[
 B_n^{\rm C}=(-1)^nA,\quad 0\le n\le2j.
\]
Their Christoffel--Geronimus connection \cite{CG26b}, given in
\cite[Eqs.~(2.4)--(2.5), Section~4.3 and Appendix~B]{PVZ23}, gives
\begin{equation}\label{eq:BI-Geronimus}
 \left(x-\rho\right)\mathbf u^{\rm B}
 =\kappa\mathbf u^{\rm C},\quad
 \kappa=B_0^{\rm B}-\rho\ne0.
\end{equation}
Testing on $1$ gives $\kappa$. Write $F_n^{\rm B},G_n^{\rm B}$
for the auxiliary Bannai recurrence factors denoted $A_n,C_n$ in
\cite{PVZ23}. They satisfy $\kappa=-F_0^{\rm B}$, and the first
monic subdiagonal coefficient is $F_0^{\rm B}G_1^{\rm B}\ne0$.
Thus regularity ensures $\kappa\ne0$.

\begin{proposition}\label{prop:BI-classical}
The complementary functional is classical in the alternating
calculus with $\left(\phi,\psi\right)=\left(0,x-A\right)$.
In addition, its Bannai partner is classical in the same calculus with
$\left(\phi,\psi\right)
 =\left(\left(x-\rho\right)\left(x-A\right),0\right)$.
\end{proposition}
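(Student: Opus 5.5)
By \eqref{eq:alternating-annihilation}, classicality in the alternating calculus reduces to two annihilation conditions on even test polynomials, plus regularity of order at least $3$. We assume regularity of both systems, and $j\ge1$ gives $2j+1\ge3$ masses for the complementary functional. It therefore remains to verify the annihilation identities for each pair.

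\textbf{Complementary functional.} With $\phi=0$, the first condition in \eqref{eq:alternating-annihilation} is empty. We must show $\langle\mathbf u^{\rm C},(x-A)p(x^2)\rangle=0$ for every $p\in\mathcal P$. The plan follows the alternating reconstruction of \cite[Proposition~1.3]{CG26b}.

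The published diagonal coefficients are $B_n^{\rm C}=(-1)^nA$. Put $\tilde P_n(x)=(-1)^nP_n^{\rm C}(-x)$. Then $\tilde P_n$ satisfies the monic recurrence with diagonal $-B_n^{\rm C}$ and the same $C_n^{\rm C}$.

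We treat $P^{\rm C}_n(x)$ and $P^{\rm C}_n(-x)$ jointly. The key observation is the following. By induction on the recurrence, for $0\le n\le 2j$ the polynomial $P_{2n}^{\rm C}$ is a polynomial in $x^2$ of degree $n$, and $P_{2n+1}^{\rm C}=(x-A)\,q_n(x^2)$. Concretely, one checks that the two-step recurrence preserves these forms exactly because $B_{2n}=A$ and $B_{2n+1}=-A$. The odd-indexed relation $P_{2n+1}=(x-A)P_{2n}-C_{2n}P_{2n-1}$ then preserves the factor $(x-A)$. The even step $P_{2n+2}=(x+A)P_{2n+1}-C_{2n+1}P_{2n}$ produces $(x^2-A^2)q_n(x^2)-C_{2n+1}P_{2n}$, which is even.

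Hence the polynomials $(x-A)x^{2k}$, for $0\le k\le j-1$, span the same space as $P_1^{\rm C},P_3^{\rm C},\ldots,P_{2j-1}^{\rm C}$. All of these are annihilated by $\mathbf u^{\rm C}$, since each $P^{\rm C}_n$ with $n\ge1$ is orthogonal to $1$.

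Higher degrees are handled by the terminal polynomial $P_{2j+1}^{\rm C}$. It vanishes on the support and, by the same induction, has the form $(x-A)q_j(x^2)$. Division of $p(x^2)$ by $q_j(x^2)$ in the variable $x^2$ then reduces every test polynomial to degree below $j$ in $x^2$. This gives the identity for all $p$.

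\textbf{Bannai partner.} With $\psi=0$, we need $\langle\mathbf u^{\rm B},(x-\rho)(x-A)p(x^2)\rangle=0$. By \eqref{eq:BI-Geronimus}, this equals $\kappa\langle\mathbf u^{\rm C},(x-A)p(x^2)\rangle$, which vanishes by the first part. Regularity of $\mathbf u^{\rm B}$ is assumed, and it has $2j+2\ge4$ masses, so it is classical.

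\textbf{Main obstacle.} I expect the hardest step to be confirming \eqref{eq:BI-Geronimus} and the diagonal formula $B_n^{\rm C}=(-1)^nA$ from the published data \cite{PVZ23}, including the parity-dependent choice of $A$ in \eqref{eq:BI-parameters}. My approach there is to read $B_n^{\rm C}$ off the published recurrence in both parities of $j$. I would then derive the Geronimus relation from the kernel-polynomial connection in \cite[Eqs.~(2.4)--(2.5)]{PVZ23}. This works by showing that $(x-\rho)\mathbf u^{\rm B}$ and $\mathbf u^{\rm C}$ have the same orthogonal polynomials: both are supported on the complementary nodes, since $\rho$ is a Bannai node removed by the factor. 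Two regular finite functionals on the same $2j+1$ nodes with the same monic polynomials are proportional. Testing on $1$ then fixes the constant as $\kappa$.
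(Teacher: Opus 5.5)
Your proof is correct. For the Bannai partner it is the paper's argument: pair $(x-\rho)(x-A)p(x^2)$ against $\mathbf u^{\rm B}$ and apply \eqref{eq:BI-Geronimus}. For the complementary functional you take a different route.

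The paper represents the functional as $\langle\mathbf u^{\rm C},p\rangle=e_0^{\mathsf T}p(J)e_0$, with $J$ the monic recurrence matrix. It writes $J=AH+K$ with $H=\diag(1,-1,1,\ldots,1)$ and $HK=-KH$, so $J^2$ commutes with $H$. Then $(J-AI)J^{2n}e_0=KJ^{2n}e_0$ lies in the odd coordinate subspace, which kills $\langle\mathbf u^{\rm C},(x-A)x^{2n}\rangle$ for every $n$ at once.

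You instead derive from $B_n^{\rm C}=(-1)^nA$ that $P_{2n}^{\rm C}\in\mathbb C[x^2]$ and $P_{2n+1}^{\rm C}=(x-A)q_n(x^2)$ with $q_n$ monic. You then split the test polynomial. Low degrees vanish by orthogonality to $P_0$. High degrees are handled by dividing $p$ by $q_j$ and using the terminal polynomial $P_{2j+1}^{\rm C}=(x-A)q_j(x^2)$.

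The inputs are the same in both routes. Each needs the terminal diagonal entry $B_{2j}^{\rm C}=A$. The paper's matrix representation is itself justified by the same terminal-polynomial division you use, so the work is redistributed rather than saved. The matrix route treats all degrees uniformly once the representation is in place. Yours is more elementary and yields the quadratic factorisation of the complementary polynomials. The paper needs that factorisation anyway, when it recovers the weights via $P_{2n}^{\rm C}=T_n(x^2)$ and in \eqref{eq:KLS-pBI-Wilson}.

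Minor points:
\begin{itemize}
\item The index range for $P_{2n}^{\rm C}$ should be $0\le n\le j$, not $0\le n\le 2j$.
\item The reflected family $\tilde P_n$ is never used.
\item Your ``main obstacle'' lies outside this proof. The paper takes $B_n^{\rm C}=(-1)^nA$ and \eqref{eq:BI-Geronimus} from \cite{PVZ23} as established inputs.
\item If you do derive \eqref{eq:BI-Geronimus} as sketched, you must take $\supp\mathbf u^{\rm B}=E_{\rm C}\cup\{\rho\}$ directly from the published nodes. In the paper that support is a consequence of \eqref{eq:BI-Geronimus}, so using it otherwise would be circular.
\end{itemize}
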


\begin{proof}
The terminal row is included in the published alternating diagonal.
Let $J$ be its finite monic recurrence matrix, with diagonal
$B_n^{\rm C}$, lower diagonal one and upper diagonal $C_{n+1}^{\rm C}$,
and put $H=\diag(1,-1,1,\ldots,1)$. Then $J=AH+K$, where $K$ has
zero diagonal and $HK=-KH$. Consequently
\[
 J^2=A^2I+K^2,\quad HJ^2=J^2H.
\]
The finite recurrence and its terminal condition represent the
normalised functional as
$\langle\mathbf u^{\rm C},p\rangle=e_0^{\mathsf T}p(J)e_0$,
where $e_0$ is the first coordinate vector and $\mathsf T$ denotes transpose.
Indeed, both functionals agree on the orthogonal basis through
degree $2j$ and are annihilated by its terminal polynomial;
polynomial division identifies them on all of $\mathcal P$.
Since $He_0=e_0$ and $K$ exchanges the two parity subspaces,
$e_0^{\mathsf T}K(J^2)^ne_0=0$ for every $n\ge0$. It follows that
\[
 \langle\mathbf u^{\rm C},(x-A)x^{2n}\rangle=0,
 \quad n\ge0.
\]
This proves the complementary annihilation identity on all polynomials,
including degrees above the finite orthogonal system. For the Bannai
functional, \eqref{eq:BI-Geronimus} gives:
\[
 \langle\mathbf u^{\rm B},
 (x-\rho)(x-A)p(x^2)\rangle=0.
\]
Criterion \eqref{eq:alternating-annihilation} proves both claims.
\end{proof}

\paragraph{\textbf{Recovering the weights.}}
The explicit product calculation below assumes non-zero, distinct
lifts and defined weight expressions. The matrix proof above also
covers regular distinct-node systems with a zero lift. An unpaired
quadratic mass $\nu_0$ at $t=A=0$ lifts to $\nu_0\delta_0$;
no division by $t$ is used for that contribution.
For even $j$, take the para--Racah system of truncation index $j$
with $(A,C,\omega)$ in place of its parameters $(a,c,\alpha)$, respectively, where
\[
 A=\frac{b-j-1-a}{4},\quad
 C=\frac{b-j+3+a}{4},\quad \omega=1-\alpha.
\] Substitution
in the published recurrence identifies the quadratic component
as
\[
 P_{2n}^{\rm C}(x)=T_n(x^2),\quad
 T_n(y)=(-1)^nR_n(-y),\quad 0\le n\le j,
\]
where $R_n$ denotes its monic para--Racah polynomials.

Let $\nu_t$ be its normalised weight at $-t^2$, given by
\eqref{eq:R-weights-full}--\eqref{eq:R-normalisation}.
For distinct lifts with $t\ne0$, the complementary weights are
\begin{equation}\label{eq:BI-masses}
 \omega^{\rm C}(t)=\frac{t+A}{2t}\nu_t,\quad
 \omega^{\rm C}(-t)=\frac{t-A}{2t}\nu_t.
\end{equation}
Their sum is $\nu_t$, and they satisfy the annihilation identity
proved above. These conditions determine the functional, since
every polynomial has the form
$p_0(x^2)+\left(x-A\right)p_1(x^2)$.
At $t=A$, the weight at $-A$ vanishes, leaving $2j+1$ nodes.

Put $E_{\rm C}=\supp\mathbf u^{\rm C}$ and
assume $\rho\notin E_{\rm C}$. Equation~\eqref{eq:BI-Geronimus}
then gives the Bannai weights:
\begin{equation}\label{eq:BI-partner-masses}
 \begin{aligned}
 \omega^{\rm B}(x)
 &=\frac{\kappa\omega^{\rm C}(x)}{x-\rho},
       \quad x\in E_{\rm C},\\[7pt]
 \omega^{\rm B}(\rho)
 &=1-\kappa\sum_{x\in E_{\rm C}}
              \frac{\omega^{\rm C}(x)}{x-\rho}.
 \end{aligned}
\end{equation}
Thus normalisation fixes the extra mass.

For odd $j$, the same construction uses
\[
 A=-\frac{b+j+1+a}{4},\quad
 C=\frac{a-b-j+1}{4},\quad \omega=\alpha.
\]
For $j=1$, use instead the two-point quadratic input
$\nu=(1-\alpha)\delta_{-A^2}+\alpha\delta_{-C^2}$;
no classicality assertion for this input is needed.
These are the other two published branches. The quadratic
recurrence identifies the complementary functional, and
\eqref{eq:BI-Geronimus} determines its Bannai partner.
Consequently, the weights and monic recurrences agree with
\cite{PVZ23}, including the free parameter and terminal degree.

\subsection{The identification}
We can now collect the identifications proved above.
\begin{theorem}
Under the regularity and distinctness assumptions of this section,
the full orthogonality functionals of the five para-families are classical
in the sense of Definition~\ref{def:classical}, for the maps and pairs in
Table~\ref{tab:structural-location}.
For each of the first four families, the two component functionals
span the solution space of its structural equation.
Every normalised solution is their linear combination with a free parameter
$\alpha$; para--Krawtchouk chooses $\alpha=1/2$. Only the
central recurrence coefficients depend on this parameter, as in
Proposition~\ref{prop:central-block}.

For complementary para--Bannai--Ito the pair is $(0,x-A)$.
For its Bannai partner the pair is $((x-\rho)(x-A),0)$.
Both use the alternating map and are obtained from the quadratic
input described above.
\end{theorem}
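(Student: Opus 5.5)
The theorem collects the identifications already carried out, so the proof assembles earlier results family by family.

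For each of the four non-alternating families I would proceed in four steps.

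\textbf{Step 1: the components.} Proposition~\ref{lem:one-progression} applies on each progression. The endpoint zeros of $\lambda_\pm$ and the weight ratios have been verified in Example~\ref{ex:PK-masses} and in the ``Checking the weights'' paragraphs. Hence each component $\mathbf u^{(0)},\mathbf u^{(1)}$ solves \eqref{eq:structural} for the pair in Table~\ref{tab:structural-location}. The normalising sums \eqref{eq:PK-normalisations}, \eqref{eq:R-normalisation}, \eqref{eq:qR-normalisation} and \eqref{eq:qK-normalisation} are non-zero by hypothesis, so both components have mass one.

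\textbf{Step 2: the published functional.} By linearity, $\mathbf u_\alpha$ is again a solution. The displayed identification of $w_k^{\rm pub}$ shows that $\mathbf u_\alpha$ is the published functional, with $\alpha=1/2$ for para--Krawtchouk.

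\textbf{Step 3: the solution space.} By \eqref{eq:four-dn-lambda}, $d_{N-1}$ is the only vanishing coefficient; in the $q$-cases this uses that $q$ is not a root of unity. Proposition~\ref{prop:free-moment} then shows that the solution space is two-dimensional. The disjoint supports make the two components linearly independent, so they span it, and every normalised solution has the form \eqref{eq:family-mixture}.

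\textbf{Step 4: regularity and the recurrence.} Proposition~\ref{prop:free-moment} also shows that the two components share their moments through degree $N-1$. Together with the assumed component regularity and distinct nodes, this supplies the hypotheses of Proposition~\ref{prop:central-block}. That proposition gives regularity of order $N+1$ for $\alpha\ne0,1$, and it shows that only the central recurrence coefficients depend on $\alpha$. For $N\ge2$ the order $N+1\ge3$, which is what Definition~\ref{def:classical} requires.

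\textbf{Para--Bannai--Ito.} Proposition~\ref{prop:BI-classical} gives the two structural pairs in the alternating calculus. Regularity of order at least $3$ comes from the assumed regularity of the published systems, since the truncation indices are $2j\ge2$ and $2j+1$, giving at least $3$ nodes. The ``Recovering the weights'' paragraph shows that both functionals arise from the quadratic para--Racah input, or from the two-point input when $j=1$.

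\textbf{Main obstacle.} The step needing most care is Step~3 in the $q$-cases. I must confirm that $q^{-m}t^n=t^{-n}$, that is $q^{n-m}=1$ (equivalently $t^{2n-2m}=1$), forces $n=m=N-1$ exactly when $q$ is not a root of unity. I must also confirm that the existence of a mass-one solution, provided by Step 1, makes the equation at $n=N-1$ compatible, as Proposition~\ref{prop:free-moment} requires.
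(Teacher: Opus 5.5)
Your proposal is correct and follows essentially the same route as the paper. The paper's proof likewise obtains the components from Proposition~\ref{lem:one-progression}, identifies the published weights, uses \eqref{eq:four-dn-lambda} with Proposition~\ref{prop:free-moment} for the solution space, uses Proposition~\ref{prop:central-block} for regularity and the $\alpha$-dependence, and uses Proposition~\ref{prop:BI-classical} with \eqref{eq:BI-Geronimus}--\eqref{eq:BI-partner-masses} for the alternating pairs; your explicit checks (that $q^{n-m}=1$ forces $n=m$, and that the order $N+1\ge3$) only spell out what the paper leaves implicit.
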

\begin{proof}
The pairs in the table have the classical degrees. In each of the
first four cases, direct substitution in Proposition~\ref{lem:one-progression}
gives the weights. The product and normalisation calculations
identify the full functional. Formula~\eqref{eq:four-dn-lambda}
and Proposition~\ref{prop:free-moment} give the assertion about
all solutions. Proposition~\ref{prop:central-block} gives the
full recurrence and its dependence on $\alpha$. The alternating assertions follow from
Proposition~\ref{prop:BI-classical} and the functional identities
\eqref{eq:BI-Geronimus}--\eqref{eq:BI-partner-masses}.
Finally, uniqueness of the monic orthogonal polynomials for a
regular functional gives equality of the polynomials and their
full recurrences.
\end{proof}

The identifications concern the full published functionals.
The map and structural pair are fixed while $\alpha$ varies.
For the first four, Proposition~\ref{prop:central-block} gives
persymmetry at $\alpha=1/2$. For each alternating system separately, write $F_n,G_n$ for the auxiliary
factors of \cite{PVZ23}. At $\alpha=1/2$ they satisfy $F_n=G_{M-n}$,
where $M=2j$ or $2j+1$, for either parity of $j$
\cite[Sections~3.1 and~4.1, Appendices~A--B]{PVZ23}.
Their monic recurrence entries in our notation are
$B_n=\rho-F_n-G_n$ and $C_n=F_{n-1}G_n$, so
$B_{M-n}=B_n$ and $C_{M+1-n}=C_n$.
With real nodes and positive weights these members therefore
give the persymmetric Jacobi matrices used next.

\section{An application: matrices with perfect state transfer}
\label{sec:PST}

We now use the functional construction for a different purpose.
The problem is to choose a real Jacobi matrix that transfers a
state from its first site to its last site at a prescribed time.
In quantum spin chains, the aim is to transport quantum information
between the ends with probability one, without active control
during the transfer. We refer interested readers to
\cite{VZPST12,GTVZ17} for the physical setting and its connection
with orthogonal polynomials.

Here we recall the spectral criterion and reconstruction results
from those papers, using our notation. We express the criterion
through the two finite functionals of
Section~\ref{sec:two-progressions}, then add the structural equation
to obtain classical examples. No classicality assumption is
needed for the transfer criterion itself.

\subsection{The functional characterisation}
Fix $N\ge2$ and $T>0$. Let $e_0,\ldots,e_N$
be the coordinate vectors and let
\[
 Je_n=\sqrt{C_{n+1}}e_{n+1}
                  +B_ne_n+\sqrt{C_n}e_{n-1},
 \quad 0\le n\le N,
\]
with the outside terms omitted. Here $B_n\in\mathbb R$ for
$0\le n\le N$ and $C_n>0$ for $1\le n\le N$. This is the symmetric Jacobi
matrix associated with \eqref{eq:TTRR}.
We call $J$ a $T$-PST matrix if, for some real $\theta$,
\begin{equation}\label{eq:PST-definition}
 e^{-iTJ}e_0=e^{i\theta}e_N.
\end{equation}
Here the sign in the exponential only fixes a time convention.
Put $Re_n=e_{N-n}$. Persymmetry means $RJR=J$,
or equivalently
\[
 B_{N-n}=B_n,\quad C_{N+1-n}=C_n.
\]
It is a condition on the entries of $J$, without a condition
on the eigenvalue gaps.

Write the eigenvalues of $J$ in increasing order as
$x_0<\cdots<x_N$. Its normalised functional is
\[
 \mathbf u=\sum_{s=0}^{N}\omega_s\delta_{x_s},\quad
 \omega_s>0,\quad \sum_{s=0}^{N}\omega_s=1.
\]
Here $\omega_s$ is the square of the first coordinate of a
normalised real eigenvector corresponding to $x_s$.

This is the standard correspondence between Jacobi matrices
and finite orthogonality functionals \cite{GTVZ17}.
The monic polynomials $P_0,\ldots,P_N$ orthogonal with respect
to $\mathbf u$ have non-zero norms and satisfy
\eqref{eq:TTRR}, with
\[
 B_n=J_{nn},\quad 0\le n\le N,\quad
 C_n=J_{n-1,n}^{\,2},\quad 1\le n\le N.
\]
Conversely, a normalised positive functional on $N+1$ distinct
real nodes determines its monic orthogonal polynomials and
their recurrence coefficients. These give a unique Jacobi
matrix with diagonal entries $B_n$ and positive off-diagonal
entries $\sqrt{C_n}$. Thus specifying the matrix is equivalent
to specifying the functional.

We now impose perfect transfer at time $T$. The following
theorem expresses this additional condition in terms of
$\mathbf u$ and describes it more specifically through two
finite component functionals. Define
\begin{equation}
 Q(x)=\prod_{s=0}^{N}(x-x_s),\quad
 Z_Q=\sum_{s=0}^{N}\frac1{|Q'(x_s)|},\quad g=\frac\pi T.
\end{equation}

\begin{theorem}\label{thm:PST-functional}
For $N\ge2$ and a real Jacobi matrix with positive off-diagonal
entries, the following conditions are equivalent.
\begin{enumerate}[label=\textup{(\roman*)}]
\item $J$ is a $T$-PST matrix.
\item $J$ is persymmetric and
\begin{equation}\label{eq:PST-gaps}
 x_{s+1}-x_s=g m_s,\quad
 m_s\in\{1,3,5,\ldots\},\quad 0\le s<N.
\end{equation}
\item Its normalised functional is
\[
 \mathbf u=\tfrac12\mathbf u^{(0)}+\tfrac12\mathbf u^{(1)},
\]
where
\[
 \mathbf u^{(0)}=\sum_{k=0}^{r}\omega_k^{(0)}\delta_{x_{2k}},
 \quad
 \mathbf u^{(1)}=\sum_{k=0}^{\ell}\omega_k^{(1)}\delta_{x_{2k+1}},
\]
have strictly positive weights, mass one, and the same moments
through degree $N-1$, and the nodes satisfy \eqref{eq:PST-gaps}.
The integers $r,\ell$ are those in \eqref{eq:r-ell}.
\end{enumerate}
For any fixed ordered nodes, the functionals in \textup{(iii)}
are unique. The weights of their equal combination are
\begin{equation}\label{eq:PST-weights}
 \omega_s=\frac1{Z_Q|Q'(x_s)|}.
\end{equation}
Consequently, the ordered spectra satisfying \eqref{eq:PST-gaps}
are in one-to-one correspondence with $T$-PST matrices of size $N+1$.
\end{theorem}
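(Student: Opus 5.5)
The plan is to prove (i)$\Leftrightarrow$(ii) by the standard eigenvector computation, and (ii)$\Leftrightarrow$(iii) by a Lagrange-interpolation argument on the $N+1$ nodes. Both directions pass through one intermediate statement:
\[
 J \text{ is persymmetric}\iff \omega_s\propto\frac1{|Q'(x_s)|}.
\]

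\textbf{Eigenvector setup and the key lemma.} Let $v_s$ be normalised real eigenvectors with first coordinate $v_s(0)=\sqrt{\omega_s}>0$. From the recurrence, $v_s(n)=\sqrt{\omega_s}\,P_n(x_s)/\sqrt{C_1\cdots C_n}$. I would also use the classical identity
\[
 \omega_sP_N(x_s)Q'(x_s)=C_1\cdots C_N .
\]
It follows from Christoffel--Darboux, or by writing $\omega_s$ as the residue of $e_0^{\mathsf T}(x-J)^{-1}e_0$ and comparing with the $(N,0)$ entry of the resolvent. Since $P_N$ interlaces with $Q=P_{N+1}$, the sign of $P_N(x_s)$ is $(-1)^{N-s}$, and this is also the sign of $Q'(x_s)$.

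\textbf{Persymmetry $\Leftrightarrow$ weights proportional to $1/|Q'|$.}
\begin{itemize}
\item If $RJR=J$, then $Rv_s=\pm v_s$, because the spectrum is simple. Hence $v_s(N)^2=\omega_s$, so $\omega_sP_N(x_s)^2=C_1\cdots C_N$. Combined with the identity above, this gives $P_N(x_s)=Q'(x_s)$ and $\omega_s=C_1\cdots C_N/Q'(x_s)^2\cdot$ up to the sign bookkeeping, i.e.\ $\omega_s\propto1/|Q'(x_s)|$.
\item Conversely, suppose $\omega_s\propto1/|Q'(x_s)|$. Then $v_s(N)^2=\omega_s$, so $RJR$ has the same spectrum and the same first-coordinate weights as $J$. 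By the functional--matrix correspondence recalled before the theorem, $RJR=J$.
\end{itemize}
In either case $v_s(N)=(-1)^{N-s}v_s(0)$. This lemma also yields \eqref{eq:PST-weights} once condition (ii) is known.

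\textbf{(i)$\Leftrightarrow$(ii).} Expand
\[
 e^{-iTJ}e_0=\sum_s e^{-iTx_s}\sqrt{\omega_s}\,v_s,\qquad e_N=\sum_s v_s(N)v_s .
\]
Condition \eqref{eq:PST-definition} holds exactly when
\[
 e^{-iTx_s}\sqrt{\omega_s}=e^{i\theta}v_s(N)\quad\text{for every } s.
\]
Taking moduli forces $|v_s(N)|=\sqrt{\omega_s}$. The weights of $RJR$ then coincide with those of $J$, so $J$ is persymmetric, and by the lemma $v_s(N)=(-1)^{N-s}\sqrt{\omega_s}$. The phase condition becomes
\[
 e^{-iT(x_{s+1}-x_s)}=-1,
\]
which is exactly \eqref{eq:PST-gaps} with $g=\pi/T$ and odd $m_s$. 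Each step reverses, which gives (ii)$\Rightarrow$(i).

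\textbf{(ii)$\Leftrightarrow$(iii).} The functional
\[
 \mathbf v=\sum_s\frac{\delta_{x_s}}{Q'(x_s)}
\]
annihilates every polynomial of degree at most $N-1$, since $\langle\mathbf v,p\rangle$ is the leading coefficient at $x^N$ of the Lagrange interpolant of $p$. Among functionals supported on the $N+1$ nodes, $\mathbf v$ spans the annihilator of this space. Because $\operatorname{sgn}Q'(x_s)=(-1)^{N-s}$, the weights $\omega_s=c/|Q'(x_s)|$ are exactly the weights for which the even-indexed and odd-indexed restrictions of $\mathbf u$ differ by a multiple of $\mathbf v$.

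\begin{itemize}
\item (ii)$\Rightarrow$(iii): take $\mathbf u^{(0)},\mathbf u^{(1)}$ to be twice the restrictions of $\mathbf u$ to the even and odd nodes. Testing on $p=1$ shows that both have mass one. They have equal moments through degree $N-1$, and their sizes are $r+1$ and $\ell+1$.
\item (iii)$\Rightarrow$(ii): the condition on moments gives $\mathbf u^{(1)}-\mathbf u^{(0)}=\kappa\mathbf v$. Positivity of the weights forces the signs to be consistent, so every weight of $\mathbf u$ equals $|\kappa|/(2|Q'(x_s)|)$. Persymmetry then follows from the lemma.
\end{itemize}
The same computation shows that, for fixed nodes, the components are unique and the normalisation is $Z_Q$. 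For the bijection, any ordered spectrum satisfying \eqref{eq:PST-gaps} defines the positive functional \eqref{eq:PST-weights}, hence a unique Jacobi matrix, which is $T$-PST by (ii)$\Rightarrow$(i).

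The main obstacle is the lemma: persymmetry $\Leftrightarrow$ $\omega_s\propto1/|Q'(x_s)|$. Its delicate points are the identity $\omega_sP_N(x_s)Q'(x_s)=\prod C_n$ and the reconstruction argument showing that equal spectral data force $RJR=J$. I would first try to derive the identity from the residue of the $(0,N)$ resolvent entry, which equals $\sqrt{C_1\cdots C_N}/Q(x)$.
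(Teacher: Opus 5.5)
Your arguments for (i)$\Leftrightarrow$(ii), for (ii)$\Rightarrow$(iii), and for uniqueness and \eqref{eq:PST-weights} follow the paper's proof. Equal spectral measures at the two ends, together with uniqueness of the Jacobi matrix, give persymmetry. The sign of $v_s(N)$ reduces transfer to the phase condition; you get this sign from interlacing, where the paper cites the oscillation theorem. The identity $\omega_sP_N(x_s)Q'(x_s)=C_1\cdots C_N$, which the paper obtains by pairing the cardinal polynomial of $x_s$ with $P_N$, gives the weights. Lagrange interpolation then splits $\mathbf u$ into its two alternate-node components.

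The forward half of your lemma, however, contains an algebra error that must be corrected. Since $v_s(N)^2=\omega_sP_N(x_s)^2/(C_1\cdots C_N)$, the condition $v_s(N)^2=\omega_s$ gives $P_N(x_s)^2=C_1\cdots C_N$, not $\omega_sP_N(x_s)^2=C_1\cdots C_N$. The consequences you then write, $P_N(x_s)=Q'(x_s)$ and $\omega_s=C_1\cdots C_N/Q'(x_s)^2$, are false. They also contradict the conclusion $\omega_s\propto1/|Q'(x_s)|$ that you draw from them. Take the $3\times3$ matrix with zero diagonal and unit off-diagonal entries, so $Q(x)=x^3-2x$. At the nodes $-\sqrt2,0,\sqrt2$ the weights are $1/4,1/2,1/4$, while $P_2(x_s)=1,-1,1$ and $Q'(x_s)=4,-2,4$.

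The correct step is $P_N(x_s)=(-1)^{N-s}\sqrt{C_1\cdots C_N}$, hence $\omega_s=\sqrt{C_1\cdots C_N}/|Q'(x_s)|$. This is exactly the paper's computation. The residue identity $v_s(0)v_s(N)=\sqrt{C_1\cdots C_N}/Q'(x_s)$, which you propose to use, gives it in one line.

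In the converse half, proportionality alone does not give $v_s(N)^2=\omega_s$. If $\omega_s=c/|Q'(x_s)|$, the identity gives $v_s(N)^2=\omega_sC_1\cdots C_N/c^2$. You must then invoke $\sum_sv_s(N)^2=1$, since the last row of the orthogonal eigenvector matrix has norm one, to force $c^2=C_1\cdots C_N$.

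The genuine difference is in (iii)$\Rightarrow$(ii). The paper applies Proposition~\ref{prop:central-block} with $\alpha=1/2$. Two positive mass-one components with common moments through degree $N-1$ have reflected recurrence coefficients, so persymmetry is read directly from the recurrence of $\mathbf u$. You instead identify $\mathbf u^{(1)}-\mathbf u^{(0)}$ with a multiple of $\mathbf v$ and use the alternation of supports prescribed in (iii) to obtain $\omega_s\propto1/|Q'(x_s)|$. You then return to persymmetry through the converse half of your lemma, via equal end spectral measures and inverse spectral uniqueness.

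Your route is self-contained and purely spectral, but it relies on the prescribed alternation and on the uniqueness theorem. The paper's route needs neither at this step. The component-recurrence proposition holds for any two disjoint supports of sizes $r+1$ and $\ell+1$ and also yields the central entries. The transfer criterion thereby becomes an instance of the construction of Section~\ref{sec:two-progressions}.
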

\begin{proof}
We recall the standard spectral argument
\cite[Sections~III--IV]{VZPST12}. Let $v_s$ be a normalised real
eigenvector at $x_s$. Perfect transfer gives
$e^{-iTx_s}v_s(0)=e^{i\theta}v_s(N)$.
Thus the spectral measures at the two ends coincide. Uniqueness
of the Jacobi matrix associated with the measure implies that
$J$ coincides with its reversal: it is persymmetric.
For such a matrix, the oscillation theorem gives
$v_s(N)=(-1)^{N+s}v_s(0)$, so transfer holds precisely when
$e^{-iTx_s}=e^{i\theta}(-1)^{N+s}$ for every $s$.
Consecutive phases give \eqref{eq:PST-gaps}, and that condition
conversely gives the required phases.

The same signs give
$P_N(x_s)=(-1)^{N+s}\sqrt{h_N}$, where
$h_N=C_1\cdots C_N$. The cardinal polynomial
$\ell_s(x)=Q(x)/((x-x_s)Q'(x_s))$ has leading coefficient
$1/Q'(x_s)$. Pairing it with $P_N$ gives
$\omega_sP_N(x_s)=h_N/Q'(x_s)$, which yields
\eqref{eq:PST-weights} after normalisation
\cite[Section~3]{GTVZ17}. We now prove its functional formulation.

Lagrange interpolation gives
\begin{equation}\label{eq:PST-interpolation}
 \sum_{s=0}^{N}\frac{p(x_s)}{Q'(x_s)}=0,
 \quad \deg p\le N-1.
\end{equation}
Indeed, the left side is the coefficient of $x^N$ in the
interpolation polynomial for $p$. Since
$\operatorname{sgn}Q'(x_s)=(-1)^{N-s}$, the weights
\eqref{eq:PST-weights} give
\[
 \sum_{s\text{ even}}\omega_s p(x_s)
 =\sum_{s\text{ odd}}\omega_s p(x_s),\quad \deg p\le N-1.
\]
Taking $p=1$ shows that each sum has mass $1/2$.
Multiplying each restriction by two therefore gives the two
functionals in \textup{(iii)}. Their uniqueness follows because the
matrix $(x_s^k)_{0\le k<N,\,0\le s\le N}$ has rank $N$:
its kernel is generated by $(1/Q'(x_s))_s$, and the two
mass-one conditions fix the scale.

Conversely, suppose these two functionals have the stated
moments. Proposition~\ref{prop:central-block} at $\alpha=1/2$
gives persymmetry directly. Adding \eqref{eq:PST-gaps} gives
\textup{(ii)} and proves the equivalence.
\end{proof}

Thus the equal combination and the gap condition solve
different parts of the problem. Without the gaps, the same
functional construction characterises persymmetric Jacobi
matrices. No structural equation, and no uniform or quadratic
shape of the nodes, is required for that statement.

\subsection{Constructing a matrix from its spectrum}
Theorem~\ref{thm:PST-functional} is a characterisation. To use
it, choose $x_0\in\mathbb R$ and positive odd integers
$m_0,\ldots,m_{N-1}$, then put
\[
 x_s=x_0+g\sum_{k=0}^{s-1}m_k,\quad 1\le s\le N.
\]
Construct $Q$, compute \eqref{eq:PST-weights}, and recover
the monic recurrence by finite orthogonalisation. This gives
every $T$-PST matrix with positive off-diagonal entries.
The resulting orthogonal polynomials need not be classical.

The algorithms in \cite[Section~3]{GTVZ17} reduce the amount
of orthogonalisation. To state one of them in our notation, put
\[
 \Pi_0(x)=\prod_{k=0}^{r}(x-x_{2k}),\quad
 \Pi_1(x)=\prod_{k=0}^{\ell}(x-x_{2k+1}),\quad
 s_i=\sum_{x\in\supp\mathbf u^{(i)}}x.
\]
For $N=2j+1$, the polynomials $P_j,P_{j+1}$ and the central
coefficient are
\begin{equation}
 P_{j+1}=\frac{\Pi_0+\Pi_1}{2},\quad
 P_j=\frac{\Pi_0-\Pi_1}{s_1-s_0},\quad
 C_{j+1}=\frac{(s_1-s_0)^2}{4}.
\end{equation}
For $N=2j$, they are
\begin{equation}
 P_j=\Pi_1,\quad B_j=s_0-s_1,\quad
 P_{j+1}=\frac{\Pi_0+(x-B_j)\Pi_1}{2}.
\end{equation}
These are \cite[Lemma~3.3]{GTVZ17}, rather than new recurrence
formulas. Divide $P_{n+1}$ by $P_n$. The quotient is $x-B_n$
and the remainder is $-C_nP_{n-1}$. The negative of its leading
coefficient gives $C_n$; division by $-C_n$ gives $P_{n-1}$. Continuing down to $P_0=1$
recovers the first half. Reflection gives the other half.
The weighted restrictions to alternate nodes also allow the
shorter orthogonalisation described in that paper.

\subsection{Adding classicality}
To obtain transfer matrices associated with classical orthogonal
polynomials, choose the nodes through an admissible map and
obtain the weights from the structural equation instead of from
\eqref{eq:PST-weights}.

\begin{corollary}\label{cor:classical-PST}
Fix $N\ge2$ and $T>0$. Suppose one admissible map $X$ and one
pair $(\phi,\psi)$ satisfy Proposition~\ref{lem:one-progression}
on the two progressions in \eqref{eq:two-components}, with
$r,\ell$ as in \eqref{eq:r-ell}. Suppose the nodes are real
and distinct, both normalised component functionals have
strictly positive weights, and
\[
 d_{N-1}=0,\quad d_n\ne0,\quad n\ne N-1.
\]
If the ordered union of the nodes satisfies \eqref{eq:PST-gaps},
then the Jacobi matrix of
$\tfrac12\mathbf u^{(0)}+\tfrac12\mathbf u^{(1)}$ is $T$-PST.
Its functional is classical with respect to the same map
and structural pair.
\end{corollary}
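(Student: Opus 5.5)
The corollary follows by chaining earlier results: Proposition~\ref{lem:one-progression}, then Proposition~\ref{prop:free-moment}, then Proposition~\ref{prop:central-block} (via Lemma~\ref{lem:combined-polynomials}), and finally Theorem~\ref{thm:PST-functional}\,(ii)$\Rightarrow$(i).

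\textbf{Step 1: the two components.} Proposition~\ref{lem:one-progression} gives two finite solutions $\mathbf u^{(0)},\mathbf u^{(1)}$ of \eqref{eq:structural}, supported on the two progressions. Their weights are strictly positive by hypothesis, so their sums are positive and both can be normalised to mass one. Their supports are disjoint, so they are linearly independent. Each has strictly positive weights on $r+1$ (respectively $\ell+1$) distinct real nodes. Such a positive functional has positive definite Hankel matrices up to that size, so the components are regular of orders $r+1$ and $\ell+1$.

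\textbf{Step 2: common moments.} Since $d_{N-1}=0$ is the only vanishing coefficient and a mass-one solution exists, Proposition~\ref{prop:free-moment} with $m=N-1$ shows that every mass-one solution has the same $\mu_1,\dots,\mu_{N-1}$. In particular $\mathbf u^{(0)}$ and $\mathbf u^{(1)}$ share their moments through degree $N-1$. This is the one point that needs care: Proposition~\ref{prop:free-moment} fixes these moments only after normalising $\mu_0=1$. That normalisation is exactly the mass-one property established in Step~1.

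\textbf{Step 3: persymmetry and transfer.} Proposition~\ref{prop:central-block} at $\alpha=1/2$ now applies: the nodes are distinct, $r,\ell$ are as in \eqref{eq:r-ell}, and Steps~1--2 supply regularity and common moments. It shows that $\mathbf u_{1/2}$ is regular of order $N+1$ and that its recurrence is persymmetric: $B_{N-n}=B_n$ and $C_{N+1-n}=C_n$, including the central entries. For $N=2j+1$ the central reflection holds because $B_{j+1}=B_j$ at $\alpha=1/2$. For $N=2j$ it holds because $C_j=C_{j+1}=c^{(0)}/2$.

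The weights of $\mathbf u_{1/2}$ are positive on $N+1$ distinct real nodes. Hence all $C_n=H_n/H_{n-1}>0$, and the symmetric Jacobi matrix $J$ with off-diagonal entries $\sqrt{C_n}$ is real. Its spectrum is the support of $\mathbf u_{1/2}$, namely the ordered union of nodes. By hypothesis these eigenvalues satisfy \eqref{eq:PST-gaps}. Thus condition (ii) of Theorem~\ref{thm:PST-functional} holds, and $J$ is $T$-PST.

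\textbf{Step 4: classicality.} The equation \eqref{eq:structural} is linear in the functional, so $\mathbf u_{1/2}$ satisfies it with the same $(\phi,\psi)$. The degrees of $\phi$ and $\psi$ are admissible and they are not both zero. The functional is regular of order $N+1\ge3$. Definition~\ref{def:classical} therefore applies, and the functional of $J$ is classical with respect to $X$ and $(\phi,\psi)$.
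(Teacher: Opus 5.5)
Your proof is correct, but it reaches the transfer property through condition (ii) of Theorem~\ref{thm:PST-functional}, whereas the paper goes through (iii).

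\textbf{The paper's route.} After obtaining the common moments from Proposition~\ref{prop:free-moment}, the paper writes the moment equality as $\sum_s\varepsilon_s\nu_sx_s^k=0$ for $0\le k<N$. It then uses that the kernel of this Vandermonde system is spanned by $(1/Q'(x_s))_s$, whose entries alternate in sign. Since every $\nu_s>0$, the two supports must interlace in the ordered union. The components are therefore exactly the alternate-node restrictions required in (iii), and the theorem gives the transfer property.

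\textbf{Your route.} You apply Proposition~\ref{prop:central-block} at $\alpha=1/2$ directly. That proposition needs only the component sizes $r+1,\ell+1$, not the ordering of the nodes. You then combine persymmetry with the gap hypothesis to use (ii)$\Rightarrow$(i). The paper's own proof of (iii)$\Rightarrow$(ii) is exactly this appeal to Proposition~\ref{prop:central-block}, so your argument is a shortcut that makes the interlacing step unnecessary.

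\textbf{What each buys.} The paper's route yields a structural by-product: common moments and positive weights force the two progressions to interlace. The data then fall automatically under characterisation (iii), with its uniqueness statement and the explicit weights \eqref{eq:PST-weights}. Your route is shorter. It also makes explicit several checks the paper leaves implicit: component regularity from positivity, $C_n>0$, and the identification of the spectrum of $J$ with the support through the terminal polynomial $\Pi_0\Pi_1$.
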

\begin{proof}
Proposition~\ref{prop:free-moment} gives equality of the
component moments through degree $N-1$. In the ordered union,
write $\varepsilon_s=-1$ on the first support and $\varepsilon_s=1$
on the second, and let $\nu_s>0$ be the corresponding component
weight. The moment equality gives
\[
 \sum_{s=0}^{N}\varepsilon_s\nu_sx_s^k=0,
 \quad 0\le k<N.
\]
The Vandermonde matrix has a one-dimensional kernel, generated by
$(1/Q'(x_s))_{s=0}^{N}$. Its entries alternate in sign; positivity
of $\nu_s$ therefore forces the supports to alternate in the
ordered union. They are the two alternate-node supports in
Theorem~\ref{thm:PST-functional}, after labelling.
The structural equation is linear,
so the combination solves \eqref{eq:structural}. All its
weights are positive, hence it is regular of order $N+1$.
Theorem~\ref{thm:PST-functional} gives the transfer property.
\end{proof}

This gives a procedure. Fix an admissible
map and two finite progressions. Compute the ordered gaps
and choose parameters that make them odd multiples of $g$.
Choose $\phi,\psi$ of degrees at most 2 and 1, respectively,
so that $\lambda_-$ vanishes at the starts and $\lambda_+$ at the ends.
Check the interior factors, positive weight ratios, and the condition on
$d_n$. Proposition~\ref{lem:one-progression} then gives the
two functionals. The standard classical formulas give the
recurrence in the common degree ranges, and
Proposition~\ref{prop:central-block} gives the remaining coefficients
and their reflection. For these candidates there is no
need to derive a new recurrence theory. We provide one example in the quadratic setting.

\begin{example}[A quadratic construction]
Fix $j\ge1$ and $T>0$, and put $N=2j+1$ and $g=\pi/T$.
Consider
\[
 X(s)=\kappa s^2+\eta s,\quad \kappa>0,\quad \eta\in\mathbb R.
\]
Let the progressions start at $a$ and $c=a+\delta$, with
$0<\delta<1$. Write
\[
 h=\frac{\eta}{2\kappa},\quad
 u=a+h>0,\quad v=u+\delta,\quad
 x_*=-\frac{\eta^2}{4\kappa}.
\]
Then $X(s)=\kappa\left(s+h\right)^2+x_*$, on the admissible domain
$U=\mathbb C\setminus\{-h+n/2:n\in\mathbb Z\}$.

We first choose the endpoint factors. With $\xi=s+h$, take
\[
 \begin{aligned}
 \lambda_-(s)
 &=-\frac{\kappa}{4j\xi}
   \left(\xi-u\right)\left(\xi-v\right)
   \left(\xi+u+j\right)\left(\xi+v+j\right),\\[7pt]
 \lambda_+(s)
 &=-\frac{\kappa}{4j\xi}
   \left(\xi+u\right)\left(\xi+v\right)
   \left(\xi-u-j\right)\left(\xi-v-j\right).
 \end{aligned}
\]
These close the progressions at $a+j$ and $c+j$.
Reconstructing the pair from \eqref{eq:lambdas}, and putting
\[
 H=u\left(u+j\right)+v\left(v+j\right),\quad
 K=uv\left(u+j\right)\left(v+j\right),
\]
gives
\begin{equation}\label{eq:quadratic-PST-pair}
 \begin{aligned}
 \phi(x)
 &=-\frac{\left(x-x_*\right)^2
          +\kappa\left(j^2-H\right)\left(x-x_*\right)
          +\kappa^2K}{2j},\\[7pt]
 \psi(x)&=x-x_*-\frac{\kappa H}{2}.
 \end{aligned}
\end{equation}

The interior weight ratios are positive, so
Proposition~\ref{lem:one-progression} gives two normalised
positive finite functionals. Moreover, $d_n=1-n/(2j)$.
Proposition~\ref{prop:free-moment} therefore gives agreement
of their moments through degree $N-1$. Choosing $\alpha=1/2$
now gives a persymmetric Jacobi matrix by
Proposition~\ref{prop:central-block}.

We still have to choose the parameters for transfer at time $T$.
The nodes $x_{2k}=X(a+k)$ and $x_{2k+1}=X(c+k)$, $0\le k\le j$,
interlace, and their gaps are
\begin{equation}\label{eq:quadratic-PST-gaps}
 \begin{aligned}
 x_{2k+1}-x_{2k}
 &=\kappa\delta\left(2u+2k+\delta\right),
       &&0\le k\le j,\\[7pt]
 x_{2k+2}-x_{2k+1}
 &=\kappa\left(1-\delta\right)
             \left(2u+2k+1+\delta\right),
       &&0\le k<j.
 \end{aligned}
\end{equation}
Whenever these are odd multiples of $g$, the matrix is
$T$-PST by Corollary~\ref{cor:classical-PST}.

The recurrence can be written explicitly without computing
moments. Put $\Sigma=u+v$ and $\theta_n=j+1-n$.
The standard quadratic formulas, together with
Proposition~\ref{prop:central-block}, give
\begin{equation}\label{eq:quadratic-PST-recurrence}
 \begin{aligned}
 B_n
 &=x_*+\frac{\kappa}{2}
       \left(H+n\left(N-n\right)\right),\\[7pt]
 C_n
 &=\frac{\kappa^2n\left(N+1-n\right)
          \left(\Sigma+n-1\right)\left(\Sigma+N-n\right)
          \left(\theta_n^2-\delta^2\right)}
         {4\left(4\theta_n^2-1\right)}.
 \end{aligned}
\end{equation}
The ranges are $0\le n\le N$ and $1\le n\le N$, respectively.
Both reflection identities are visible in these expressions. To obtain
transfer matrices with both map coefficients $\kappa,\eta$ non-zero, take
\[
 \kappa=8Mg,\quad
 \delta=\frac14,\quad
 u=\frac{4L+1}{8},\quad
 a=u-\frac{\eta}{2\kappa},\quad c=a+\frac14,
\]
where $M$ is any positive odd integer, $L$ is any non-negative
integer, and $\eta\in\mathbb R\setminus\{0\}$ is arbitrary. The nodes are
\[
 \begin{aligned}
 x_{2k}&=x_*+\frac{Mg}{8}\left(8k+4L+1\right)^2,\\[7pt]
 x_{2k+1}&=x_*+\frac{Mg}{8}\left(8k+4L+3\right)^2.
 \end{aligned}
\]
Their gaps divided by $g$ are
\[
 M\left(4k+2L+1\right),\quad
 M\left(12k+6L+9\right),
\]
respectively. These are positive odd integers, so every such
choice gives a classical $T$-PST matrix. The parameter $\eta$
remains free and changes only the common spectral shift $x_*$.

For a concrete matrix, take $j=1$, $L=0$, $M=1$ and $\eta=4g$.
Then
\[
 X(s)=8gs^2+4gs,\quad a=-\frac18,\quad c=\frac18,
\]
and the ordered nodes are $-3g/8$, $5g/8$, $77g/8$ and $117g/8$.
Formula \eqref{eq:quadratic-PST-recurrence} gives
\[
 \begin{aligned}
 B_0=B_3&=\frac{17g}{8},&
 B_1=B_2&=\frac{81g}{8},\\[7pt]
 C_1=C_3&=\frac{75g^2}{4},&
 C_2&=9g^2.
 \end{aligned}
\]
Thus
\[
 J=\frac g8
 \begin{pmatrix}
 17&20\sqrt3&0&0\\[7pt]
 20\sqrt3&81&24&0\\[7pt]
 0&24&81&20\sqrt3\\[7pt]
 0&0&20\sqrt3&17
 \end{pmatrix}.
\]
Its spectral gaps are $g,9g,5g$, giving perfect transfer
at $T=\pi/g$.
\end{example}

\section{Manufacturing a family: the structure behind the example}
\label{sec:manufactured}

A further example makes the distinction between producing formulas
and explaining a structure explicit. It illustrates the recurrence
construction from components with common moments in
Proposition~\ref{prop:central-block}.
We choose the radii freely. Common moments fix the positive weights
and the central isospectral deformation of \cite[Section~4]{GTVZ17},
derived here directly from the component functionals.

Take $0<a<b<c$ and $0<\alpha<1$. On $\{\pm\sqrt a,\pm\sqrt c\}$,
assign total masses $p,1-p$ to the two pairs; on
$\{0,\pm\sqrt b\}$, assign masses $1-q,q$ to zero and the pair.
Split each pair equally. All odd moments then vanish, and agreement
of the second and fourth moments requires
\[
 \begin{aligned}
 pa+(1-p)c&=qb,\\[7pt]
 pa^2+(1-p)c^2&=qb^2.
 \end{aligned}
\]
The determinant is $-b(c-a)(a+c-b)\ne0$, so these equations
determine the weights uniquely. Put, locally in this section,
\[
 d=a+c-b,\quad M=\frac{ac}{d},\quad
 p=\frac{c(c-b)}{(c-a)d},\quad q=\frac{ac}{bd}.
\]
Then $d>0$, $0<p,q<1$, since
\[
 1-p=\frac{a(b-a)}{(c-a)d},\quad
 1-q=\frac{(b-a)(c-b)}{bd}.
\]
Define the mass-one functionals
\[
 \begin{aligned}
 \mathbf u^{(0)}&=\frac p2(\delta_{\sqrt a}+\delta_{-\sqrt a})
       +\frac{1-p}{2}(\delta_{\sqrt c}+\delta_{-\sqrt c}),\\[7pt]
 \mathbf u^{(1)}&=(1-q)\delta_0
       +\frac q2(\delta_{\sqrt b}+\delta_{-\sqrt b}),\\[7pt]
 \mathbf u_\alpha&=(1-\alpha)\mathbf u^{(0)}+\alpha\mathbf u^{(1)}.
 \end{aligned}
\]
The components have positive weights on 4 and 3 distinct points,
respectively. They are regular of orders 4 and 3. Their odd moments
vanish, and both have second moment $M$ and fourth moment $bM$.
Thus their moments agree through degree 5, whereas
\[
 \langle\mathbf u^{(0)}-\mathbf u^{(1)},x^6\rangle
 =\frac{ac(b-a)(c-b)}{d}>0.
\]
The seven mixture weights are positive for every $0<\alpha<1$.
A non-zero real polynomial of degree at most 6 cannot vanish at all
7 nodes, so its squared norm is positive and $\mathbf u_\alpha$
is regular of order 7.

These components meet the hypotheses of
Proposition~\ref{prop:central-block}, with $N=6$, $j=3$,
$r=3$, $\ell=2$. The common polynomials begin with
\[
 P_0=1,\quad P_1=x,\quad P_2=x^2-M,\quad P_3=x(x^2-b).
\]
The smaller component terminates at $P_3$; the larger one has
\[
 \Pi_0=(x^2-a)(x^2-c)=xP_3-dP_2.
\]
The proposition therefore gives the entire recurrence:
\begin{equation}\label{eq:manufactured-seven-recurrence}
 \begin{gathered}
 B_n=0,\quad 0\le n\le6,\\[7pt]
 C_1=C_6=M,\quad C_2=C_5=b-M=\frac{(b-a)(c-b)}d,\\[7pt]
 C_3=(1-\alpha)d,\quad C_4=\alpha d.
 \end{gathered}
\end{equation}
Every $C_n$ is positive. Only the two central coefficients depend on
$\alpha$, and the terminal polynomial is
\[
 P_7=x(x^2-a)(x^2-b)(x^2-c).
\]
Consequently all these Jacobi matrices have the same seven-point
spectrum; $\alpha=1/2$ gives the persymmetric member.

Their classicality is global. Use the alternating admissible map
$X(s)=e^{\pi is}$ on the half-step-invariant domain $U=\mathbb C$,
as in Section~\ref{sec:para-BI}. The polynomial
$Sf(x)=(f(ix)+f(-ix))/2$ is even, so symmetry gives
\[
 \langle\mathbf S(x\mathbf u_\alpha),f\rangle
 =\langle\mathbf u_\alpha,xSf\rangle=0,
 \quad f\in\mathcal P.
\]
Thus $(\phi,\psi)=(0,x)$ is the same structural pair on every
polynomial degree. The atom at zero is legitimate: the calculus
acts on all polynomials by polynomial extension, although
$0\notin X(U)$. Every symmetric functional satisfies this alternating
equation; it leaves all even moments free. The example therefore
represents the recurrence construction from common moments;
it does not satisfy the hypotheses of Proposition~\ref{prop:free-moment}. Its
additional content is explicit: the chosen supports and matched
moments determine positive weights and the entire recurrence.

For instance, $a=2$, $b=5$, $c=7$ gives $p=q=7/10$ and
\[
 (C_1,\ldots,C_6)
 =(7/2,3/2,4(1-\alpha),4\alpha,3/2,7/2).
\]
The size seven is not essential. For any integer $j\ge2$, choose
$0<r_1<\cdots<r_j$, put $N=2j$, and arrange the nodes
$0,\pm r_1,\ldots,\pm r_j$ in increasing order.
Use \eqref{eq:PST-weights} with
$Q(x)=x\prod_{k=1}^j(x^2-r_k^2)$.
The Lagrange identity \eqref{eq:PST-interpolation} gives mass $1/2$
to each alternate-node restriction and common moments through degree
$2j-1$. After normalisation, label the positive components $\mathbf u^{(0)}$
and $\mathbf u^{(1)}$ on $j+1$ and $j$ nodes, respectively. Both are symmetric: $Q'$ is even and reflection
preserves index parity. Their combinations use the same alternating pair $(0,x)$;
Proposition~\ref{prop:central-block} shows that only $C_j,C_{j+1}$
depend on $\alpha$. For the larger component,
\eqref{eq:central-node-sums-even} gives
$c^{(0)}=\sum_{k=1}^j(-1)^{j-k}r_k^2>0$; positivity follows by
pairing successive squared radii. Hence
\[
 C_j=(1-\alpha)c^{(0)},\quad C_{j+1}=\alpha c^{(0)}.
\]
Nine, eleven and arbitrarily many points therefore give the same
construction.

The radii can be changed indefinitely while retaining the domain,
admissible map and global equation. Each choice permits another family
name and further identities within this same framework. Any additional
algebraic or physical distinction must be established separately.
Examples are informative when they test a conjecture, reveal an
obstruction or distinguish structures, as the counterexamples below do.
Repetition of the same construction must justify what it adds.

\section{Four recent preprints}\label{sec:discussion}

We examine the first arXiv versions of
\cite{BC26,BCLMRV26K,BCLMRV26R,BCLMRV26B}. The first treats
para--Krawtchouk, para--Racah, $q$-para--Racah and para--Bannai--Ito,
including the complementary branch. The three companions study,
respectively, the Hamiltonians and continuum limits of para--Krawtchouk,
para--Racah and para--Bannai--Ito. The counterexamples below refute
specific assertions in the first three papers. The Scarf companion
enters the structural comparison and is credited for its explicit
orthogonality condition in the probability interpretation of revival.
Following \cite{BC26}, we use $\sigma$ for the mass parameter and
$u_n$ for the monic recurrence coefficient denoted $C_n$ here.

On distinct nodes the support decomposition is valid when the
operators descend to the finite function space and preserve its two
component spaces. This function space is equivalently the polynomial
quotient by the terminal polynomial. The announced hypotheses do
not always ensure these conditions. Several further assertions are false.
We give counterexamples under the stated hypotheses and distinguish
their consequences from the confluences that remain valid.

\paragraph{\textbf{A false irreducibility theorem in the positive range.}}
Theorem~4 of \cite{BC26} is false. Take the complementary
para--Bannai--Ito family with
\[
 N=4,\quad j=2,\quad a=-4,\quad b=0,\quad
 \sigma=\tfrac12,\quad\beta=1.
\]
The original recurrence \cite[Eqs.~(3.3)--(3.5)]{PVZ23}
has $C_1=\cdots=C_4=1/2$ in our notation, and its nodes are
\[
 (x_0,x_1,x_2,x_3,x_4)
 =(5/4,-3/4,-5/4,3/4,1/4).
\]
All its weights are positive and the nodes are distinct.
Equation~(214) of \cite{BC26} gives the roots $\pm3/4$,
so the theorem declares the summands reducible. On the odd nodes, multiplication by $x$ and its difference
operator $\mathcal D_\beta$ \cite[Eqs.~(185)--(191)]{BC26} at $\beta=1$
give, in node order $(x_1,x_3)$,
\[
 X_o=\diag(-3/4,3/4),\quad
 (\mathcal D_1)_o=
 \begin{pmatrix}2/3&-2/3\\[7pt]-1/3&1/3\end{pmatrix}.
\]
The only invariant lines of $X_o$ are the coordinate lines;
neither is invariant under $\mathcal D_1$. This summand is
irreducible. The false criterion counts roots of the undeformed
reflection coefficient $C(x)$ before including the actual coefficient
$C(x)-\beta(x-\rho_2)/(2x)$, where $\rho_2=1/4$, and ignores other connections.
It therefore fails for a genuine positive orthogonality system,
not merely at a singular parameter value. 

There is also a counterexample to the general Proposition~4 on
which that criterion rests. On the Krawtchouk grid $0,\ldots,5$
with $p=1/2$ and weights $w_x=2^{-5}\binom{5}{x}$, define
$T_{\pm2}f(x)=f(x\pm2)$ and let
\[
 L=\tfrac12(x-4)(x-5)(T_{+2}-I)
   +\tfrac12x(x-1)(T_{-2}-I),\quad Y=L^2+14L.
\]
The Krawtchouk polynomials diagonalise $L$, with eigenvalues
$2n(n-5)$, and hence diagonalise $Y$. Both operators preserve
the two progressions and close at their ends. On the even progression $0,2,4$, multiplication by $x$ and $Y$
are represented by
\[
 X_e=\diag(0,2,4),\quad
 Y_e=\begin{pmatrix}
 -30&0&30\\[7pt]0&-12&12\\[7pt]6&24&-30
 \end{pmatrix}.
\]
The coefficient of the interior shift $2\mapsto0$ vanishes,
while the connections $0\leftrightarrow4\leftrightarrow2$
make the pair irreducible. The proof of Proposition~4 asserts
tridiagonality, which Proposition~3 neither assumes nor establishes.
A zero coefficient disconnects a nearest-neighbour path; it need not
disconnect a higher-order difference operator. The asserted general
criterion is false.

\paragraph{\textbf{A deformation erased from its own recurrence.}}
Equations~(83)--(84) of \cite{BC26} contain no $\sigma$;
they give the equal-mass para--Racah system while claiming to
retain $\sigma=e_1/(e_1+e_2)$, its mass parameter.
This is disproved in the positive
interior by
\[
 N=3,\quad a=1,\quad c=\tfrac32,\quad\sigma=\tfrac13.
\]
The Wilson truncation prescribed there, with $e_1=1,e_2=2$,
or the original recurrence \cite{LVZ16}, gives
\[
 B_1=-\tfrac{79}{24},\quad B_2=-\tfrac{107}{24},\quad
 C_2=\tfrac{49}{18}.
\]
The displayed formulas instead give
\[
 B_1=B_2=-\tfrac{31}{8},\quad C_2=\tfrac{49}{16}.
\]
This discrepancy is independent of polynomial normalisation.
If $\widehat R_n=k_nR_n$ with $k_n\ne0$, the coefficient of
$\widehat R_n$ in $x\widehat R_n$ is still $B_n$.
The product of the coefficients joining indices $n-1,n$ is
$(k_{n-1}/k_n)(C_nk_n/k_{n-1})=C_n$.
Its own Wilson factors \cite[Eqs.~(256)--(257)]{BC26} give
$A_1=-7/6$, $C_2^{\rm W}=-7/3$, hence
$C_2=A_1C_2^{\rm W}=49/18$ in the monic recurrence.
Scaling the orthogonality functional also changes neither coefficient.
Thus they do not define the announced family. The discrepancy
also changes the physical symmetry: the genuine central fields
differ, whereas the printed matrix is persymmetric. The parameter
controls these entries, as Proposition~\ref{prop:central-block}
explains; it cannot be discarded while retaining the identification.

\paragraph{\textbf{An invalid positivity domain and a failed finite evaluation representation.}}
Before Eq.~(140), \cite{BC26} states that positivity of all $u_n$
restricts the parameters to the listed domain. That restriction
excludes positive systems of arbitrarily large size. For any integer
$j\ge2$, take
\[
 N=2j+1,\quad q=\tfrac12,\quad a=2^{j+1},\quad
 c=\tfrac34,\quad\sigma=\tfrac12.
\]
In Eq.~(136), the two factors containing $ac$ have the same sign
for $n\le j$ and $n\ge j+2$. The factors
$aq^n-cq^{j+1}$ and $cq^n-aq^{j+1}$ have opposite signs,
as do $q^n-1$ and $q^n-q^{2j+2}$. The denominator is positive
away from $n=j+1$, where the separate formula is a positive square.
Thus every $u_n>0$; the real diagonal entries give a positive
finite orthogonality system. Its nodes are distinct: equality
between the two progressions would require $3$ to be an integral
power of $2$. Yet
\[
 a/c=2^{j+3}/3>2,\quad
 1<ac=3\cdot2^{j-1}<2^{2j}=q^{1-N},
\]
contradicting both listed restrictions. Replacing $a$ by
$(aq^j)^{-1}=1/2$ reverses the first progression and gives the
same recurrence and functional. The alleged restriction excludes
a positive system merely because its support is parametrised in
the opposite direction.
For $j=2$, its positive coefficients are
\[
 (u_1,\ldots,u_5)
 =\left(\tfrac{625}{4032},\tfrac{3965}{8064},
 \tfrac{41209}{36864},\tfrac{3965}{8064},\tfrac{625}{4032}\right).
\]

Conversely, the displayed inequalities do not ensure positivity
for arbitrarily large even $N$. For every integer $j\ge1$, put
\[
 N=2j,\quad q=\tfrac12,\quad a=\tfrac12,\quad
 c=\tfrac34,\quad\sigma=\tfrac12,\quad\tau=q^j.
\]
All the inequalities hold and the $N+1$ nodes are distinct, but
Eq.~(138) gives
\[
 u_j=u_{j+1}
 =-\frac{(1-\tau)(2-\tau)(8-3\tau)(4-3\tau)}{432\tau^2}<0.
\]
The prescribed couplings $\sqrt{u_n}$ are not real, so this
recurrence does not give a real Hermitian Jacobi or $XX$-chain
Hamiltonian. A real spectrum does not make these prescribed
recurrence coefficients positive.

Theorem~3 of \cite{BC26} uses Eq.~(140) for both parities
as its parameter hypotheses. Those conditions can fail even to
define the announced finite evaluation representation in arbitrarily
large dimension. For every $j\ge2$, take
\[
 N=2j,\quad q=\tfrac14,\quad a=2\cdot4^{j-1},\quad
 c=\tfrac98a,\quad\sigma=\tfrac12.
\]
All the inequalities hold: $a/c=8/9$ and
$ac=(9/8)q^{1-N}>q^{1-N}$. The last two nodes of the first
progression coincide, since $aq^{j-1}=2$ and $aq^j=1/2$
give the same value $5/4$ of $x(w)=(w+w^{-1})/2$.
All other support points are distinct. Let $F$ be the monic product
over these $2j$ distinct points. It is the zero function on the
entire grid. In Eqs.~(144)--(145) the other Askey--Wilson parameters
are $b=2$ and $d=4/9$. Its forward coefficient is
\[
 A(w)=\frac{(1-aw)(1-bw)(1-cw)(1-dw)}{(1-w^2)(1-qw^2)}.
\]
Here the Askey--Wilson operator is
\[
 \begin{aligned}
 (Yf)(x(w))={}&A(w)\{f(x(qw))-f(x(w))\}\\[7pt]
 &+A(w^{-1})\{f(x(q^{-1}w))-f(x(w))\},\quad f\in\mathcal P.
 \end{aligned}
\]
The residue and the derivative of the shifted variable are
\[
 \operatorname*{Res}_{w=2}A(w)=-\frac{(1-2a)(1-2c)}9,\quad
 \left.\frac{d}{dw}\{x(qw)-x(w)\}\right|_{w=2}=-\frac34.
\]
The backward coefficient $A(w^{-1})$ is regular and vanishes there
because $1-b/w=0$. Since $x(2q)=x(2)=5/4$, the product of
the residue and derivative gives the removable limit of the full expression:
\[
 (YF)(5/4)=\frac{(1-2a)(1-2c)}{12}F'(5/4)\ne0.
\]
The kernel of evaluation is not invariant: identical value data
have different images. A representation using derivatives at the
repeated node would be a different object; it would not justify
the stated diagonal evaluation representation of dimension $N+1$
or positive orthogonality on these support points.

\paragraph{\textbf{Classical components whose orthogonality was incorrectly denied.}}
Consider the odd truncation $N=2j+1$, $j\ge1$.
The Krawtchouk companion \cite[Section~4, after Eq.~(33)]{BCLMRV26K}
asserts that its dual Hahn parameters lie outside the range of
orthogonality. The cited reference explicitly includes the range
in which both parameters are less than minus the component truncation index
\cite[Eq.~(9.6.2), p.~209]{KLS10}. Here those parameters are
$-j-1-\gamma/2$ and $-j-1+\gamma/2$; both are less than $-j$
for every $0<\gamma<2$. The component grid is
$\lambda(k)=k(k-N)$, $0\le k\le j$.
For $j=1$, $\gamma=1/2$, its even component has
\[
 R_0=1,\quad R_1(\lambda)=1+\tfrac45\lambda,\quad
 \mathbf v=\tfrac38\delta_0+\tfrac58\delta_{-2}.
\]
Direct summation gives
$\langle\mathbf v,R_1\rangle=0$ and
$\langle\mathbf v,R_1^2\rangle=3/5>0$.
Indeed, its recurrence factors $A_t,C_t$
\cite[Eqs.~(39)--(40)]{BCLMRV26K} give
$A_{t-1}C_t>0$ for $1\le t\le j$ in both components when $0<\gamma<2$.
Both are positive finite dual Hahn systems on their own grid.

The Racah companion repeats the same false assertion
\cite[Section~4, after Eq.~(37)]{BCLMRV26R}.
For $j=2$, $a=1/2$, $c=1$, its even component has parameters
$(-3,-3,1/2,-1/2)$ and the positive functional
\[
 \mathbf v=\frac{66\delta_0+165\delta_2+25\delta_6}{256}.
\]
For its hypergeometric normalisation,
\[
 R_0=1,\quad R_1=1-\tfrac8{15}\lambda,\quad
 R_2=1-\tfrac45\lambda+\tfrac{16}{75}\lambda(\lambda-2),
\]
and
$\langle\mathbf v,R_nR_m\rangle
=\diag(1,11/15,33/50)_{n,m}$.
Negative parameters do not preclude a positive finite measure;
the displayed measures refute the assertions. In fact the Racah
components in Eq.~(37), with $N=2j+1$ and $j\ge1$, remain
positive throughout the announced range
$a>-1/2$, $|a|<c<a+1$. Put $d=c-a$, $S=a+c$.
On the even grid $\lambda(s)=s(s+2a)$ their difference coefficients are
\[
 \begin{aligned}
 B(s)&=\frac{(s-j)(s-j-d)(s+S)(s+2a)}
                  {(2s+2a)(2s+2a+1)},\\[7pt]
 D(s)&=\frac{s(s+2a+j)(s+S+j)(s-d)}
                  {(2s+2a-1)(2s+2a)}.
 \end{aligned}
\]
Since $0<d<1$ and $S>0$, every interior edge is positive:
$B(s)>0$ for $0\le s<j$ and $D(s)>0$ for $1\le s\le j$.
At $s=0$, evaluate the full boundary difference expression at
removable singularities. Its effective coefficient to $\lambda(1)$ is
$j(j+d)S/(2a+1)>0$. For $a=0$, both shifted arguments equal
$\lambda(1)=1$, so their coefficients add. For $a=1/2$, the backward
argument equals $\lambda(0)$ and its difference contribution vanishes.
In the finite operator and the weight ratio below, $B(0)$ denotes
this effective boundary coefficient.
Choose $v_0>0$ and define $v_{s+1}/v_s=B(s)/D(s+1)$.
These positive weights make the operator self-adjoint; its distinct polynomial
eigenfunctions are orthogonal. For the odd component, exchanging
$a,c$ replaces $d$ by $-d$, with $c>0$ and $0<d<1$;
both interior coefficients remain positive, with the same boundary
convention. Thus both papers deny
the positive classical orthogonality of the very components used
in their odd-$N$ constructions, throughout the stated ranges.
Those assertions obscure precisely the classical structure used
to obtain the limits. The dual
Hahn--Laguerre and Racah--Jacobi confluences remain valid;
they do not require the claimed loss of orthogonality.

\paragraph{\textbf{What constitutes structural novelty.}}
The counterexamples above invalidate particular theorems,
parameter domains, recurrences and finite evaluation representations.
They cannot be answered by another family name, another singular
truncation or another application of the same spectral data.
Nor does correcting them turn a support parametrisation into a
new classical structure. The conclusion of \cite{BC26} itself
acknowledges that these are not new Askey-scheme families and
identifies their classical components. That acknowledgement
does not repair its false criteria or the companions' denials
of positive component orthogonality. The first four calculi are those of \cite{NU83,NU84}, and the
alternating calculus is developed in \cite{C26,CG26b}.
The components are classical polynomial expressions. A decomposition
of the finite evaluation representation is valid when the operators
descend on distinct nodes and preserve the two component spaces.
This gives an explanation at the level of finite representations;
it does not establish the global structural equation governing
the full functionals.

The finite families, their singular truncations and isospectral
deformations predate these preprints
\cite{VZ12,LVZ16,LVZ18,BGVZ21,PVZ23,GTVZ17}.
Algebraic realisations also appear in \cite{TVZ17,GTVZ20,BGVZ22};
\cite[Section~4]{TVZ17} already describes a reducible
$q$-para--Krawtchouk representation. The first preprint seeks a unified
identification of the component modules; the companions study continuum
limits. Those analyses go beyond the finite structural identity.
The principal finite para-families studied in these preprints are not new.

The present construction identifies the full functionals through
a global structural equation, retaining the free moment and mass
deformation in the four non-alternating cases. For the transfer matrices,
regularity, positivity and the spectral gap condition are checked separately. For $N=2j+1$ and fixed parameters in their positive ranges,
the dual Hahn--Laguerre and Racah--Jacobi confluences at fixed degree,
away from the singular points and with the explicit mode normalisations
in \cite[Section~4]{BCLMRV26K} and \cite[Section~4]{BCLMRV26R},
remain valid. The finite revival identities in
\cite[Sections~12 and~11, respectively]{BCLMRV26K,BCLMRV26R}
are also valid. Their coefficients are amplitudes; their squared moduli
give probabilities for the original and reflected packets when those
packets are orthogonal, as \cite[Section~9]{BCLMRV26B} makes explicit.
They do not repair the false statements reviewed above.

Under the hypotheses of Section~\ref{sec:para-BI}, the discrete
para--Bannai--Ito family and its complementary branch used in the
Scarf companion \cite[Sections~2 and~6]{BCLMRV26B} are covered by
the present framework, including their full functionals, weights,
free parameter and complete monic recurrences, for both parities of $j$.
The continuum analysis connects these finite systems to the Scarf
systems and little $-1$ Jacobi functions already studied in \cite{PVZ11}.
Its additional contribution is that confluence and its operator
interpretation; the underlying Scarf Hamiltonian was already known.
These limits require separate arguments and are not refuted by the
counterexamples above.

None of these preprints presents experimental validation of its new
results or establishes a new quantitative advantage in robustness or
practical performance. Mathematical correctness is a minimum requirement.
An additional construction must identify the problem it solves,
what it explains beyond the existing theory and the mathematical
advance it achieves. Claims of practical usefulness must identify
a concrete physical problem and demonstrate an advantage over existing
methods; an exactly solvable Hamiltonian does not establish such
usefulness. A Hamiltonian interpretation cannot repair invalid positivity
hypotheses or an operator that does not descend to the announced finite
space. The errors above require correction before the affected formulas
can support physical conclusions.

\section{Conclusion}\label{sec:conclusion}
Under the stated hypotheses, the full orthogonality functionals of the
five para-families are classical in the sense of
Definition~\ref{def:classical}. The four non-alternating
systems arise from two finite solutions of one global equation.
Their common moments, relative mass and reflection determine the
complete recurrence. The alternating construction identifies the
complementary and Bannai partners. Equal positive combinations in
the first four cases give persymmetric Jacobi matrices; the ordered
spectral gaps decide transfer at a prescribed time.

These are classical families in disguise in the structural sense
proved here. We have specialised the broader theory of \cite{C26}
to expose their construction; that theory also supplies further
families, their functionals and finite measures. The seven-point
example makes the consequence concrete: the catalogue can grow
without a new governing principle. The counterexamples expose failures in the stated positivity
domains, irreducibility criteria, recurrence formulas and finite
evaluation representations, and show when the prescribed couplings
fail to define a Hermitian model.

In an era of AI-assisted symbolic production, mathematical publication
must establish what a result explains and adds. Errors must be
corrected, and additional examples must earn their place through
mathematical or physical content. Rewarding manufacture as a
substitute for understanding must stop. \emph{Enough, please.}

\appendix
\section{A classical dictionary: substitutions and resonant limits}
\label{app:classical-identifications}

Using the notation of \cite{KLS10}, we record resonant limits of
classical expressions for the four full non-alternating families,
classical polynomial expressions for their finite components, and quadratic Wilson
expressions for the alternating family. The limits retain the relative
mass parameter; these correspondences do not replace the global
structural identities proved above. Classicality in
Definition~\ref{def:classical} concerns the full functionals; a component
with fewer than three masses need not have the required order.
We retain the source names for identification; separate names for
instances of the same construction are unnecessary here.

Set $r=\lfloor N/2\rfloor$, $\ell=\lfloor(N-1)/2\rfloor$, so
$N=r+\ell+1$. Use the parameters and the regularity and distinctness assumptions
of Section~\ref{sec:families}, with $\alpha\ne0,1$.
Here $t$ is an auxiliary limit parameter.
Write $\mathsf M_x F$ for the rational continuation of $F$ divided
by its leading coefficient in $x$, after the displayed variable changes,
cancelling common parameter factors before evaluation.
In every limit this normalisation is performed before $t\to0$;
convergence means convergence of all polynomial coefficients.
The expressions in \cite{KLS10} are used in this algebraic sense.
The finite orthogonality measures are the functionals identified
in Section~\ref{sec:families}.
The superscripts $\rm PK,PR,qPR,qPK$ denote the four monic families
identified in Section~\ref{sec:families}; their degree-$N+1$ polynomials
are the terminal support polynomials.

\Needspace{7\baselineskip}
\subsection{The full non-alternating families}

\paragraph{\textbf{Para--Krawtchouk: continuous Hahn.}}
Let $H_n(z;a,b,c,d)$ be the continuous Hahn polynomial in
\cite[Section~9.4, Eq.~(9.4.1), p.~200]{KLS10}. Then
\begin{equation}\label{eq:KLS-PK-limit}
 \begin{aligned}
 P_n^{\rm PK}(x)
 &=\lim_{t\to0}\mathsf M_x H_n
       \bigl(ix/2;0,\gamma/2+(1-\alpha)t,
                    -r+\alpha t,-\ell-\gamma/2\bigr),\\[7pt]
 &\quad 0\le n\le N+1.
 \end{aligned}
\end{equation}
The published para--Krawtchouk system chooses $\alpha=1/2$;
other values give the mass deformation in this article.

\paragraph{\textbf{Para--Racah: Wilson.}}
Let $W_n(z;a,b,c,d)$ denote the Wilson polynomial in its squared
variable $z$, as in
\cite[Section~9.1, Eq.~(9.1.1), p.~185]{KLS10}. The published
truncation \cite{LVZ16} is exactly
\begin{equation}\label{eq:KLS-PR-limit}
 \begin{aligned}
 P_n^{\rm PR}(x)
 &=\lim_{t\to0}\mathsf M_x W_n
       \bigl(x;a,-a-r+\alpha t,c,-c-\ell+(1-\alpha)t\bigr),\\[7pt]
 &\quad 0\le n\le N+1.
 \end{aligned}
\end{equation}
In both identities the sum of the four parameters is $1-N+t$.
The limiting prescription specifies the central entries at resonance;
a bare substitution at $t=0$ does not specify the full family.

\paragraph{\textbf{$q$-para--Racah: Askey--Wilson.}}
For the Askey--Wilson polynomial $p_n$ of
\cite[Section~14.1, Eq.~(14.1.1), p.~415]{KLS10},
\begin{equation}\label{eq:KLS-qPR-limit}
 \begin{aligned}
 P_n^{q\rm PR}(x)
 &=\lim_{t\to0}\mathsf M_x p_n
    \bigl(x;a,a^{-1}q^{-r+\alpha t},
               c,c^{-1}q^{-\ell+(1-\alpha)t}\mid q\bigr),\\[7pt]
 &\quad 0\le n\le N+1.
 \end{aligned}
\end{equation}
This is the truncation of \cite{LVZ18}; the product of its four
parameters is $q^{1-N+t}$.

\paragraph{\textbf{$q$-para--Krawtchouk: big $q$-Jacobi.}}
Use the variable and parameter $\Delta$ of
\eqref{eq:qK-weights-full}, put $L=\Delta q^r$, and let $J_n$ be
the big $q$-Jacobi polynomial of
\cite[Section~14.5, Eq.~(14.5.1), p.~438]{KLS10}. Then
\begin{equation}\label{eq:KLS-qPK-limit}
 \begin{aligned}
 P_n^{q\rm PK}(x)
 &=\lim_{t\to0}\mathsf M_x J_n
       \bigl(x/L;A_t,B_t,C\mid q\bigr),
       \quad 0\le n\le N+1,\\[7pt]
 A_t&=q^{-r-1}(1+\alpha t),\quad
 B_t=q^{-\ell-1}\bigl(1+(1-\alpha)t\bigr),\quad
 C=\Delta^{-1}q^{-r-1}.
 \end{aligned}
\end{equation}
Here $A_tB_t\to q^{-N-1}$. To verify the continuous Hahn and big
$q$-Jacobi limits, use their monic recurrences
\cite[Eqs.~(9.4.3)--(9.4.4), pp.~200--201, and
(14.5.3)--(14.5.4), pp.~438--439]{KLS10}.
For the substitutions above, write $\mathcal A_n,\mathcal C_n$ for
their recurrence factors. The variable changes give, respectively,
\[
 \begin{aligned}
 (B_n,C_n)&=\bigl(2(\mathcal A_n+\mathcal C_n),
                         4\mathcal A_{n-1}\mathcal C_n\bigr),\\[7pt]
 (B_n,C_n)&=\bigl(L(1-\mathcal A_n-\mathcal C_n),
                         L^2\mathcal A_{n-1}\mathcal C_n\bigr).
 \end{aligned}
\]
Cancelling the common simple zero retains the proportions
$\alpha,1-\alpha$. The limits of
$(\mathcal A_j,\mathcal C_{j+1})$ for odd $N$, and of
$(\mathcal A_j,\mathcal C_j)$ for even $N$, are
$(\alpha,1-\alpha)$ times
\[
 \begin{array}{c|cc}
  &\text{continuous Hahn}&\text{big }q\text{-Jacobi}\\[7pt]
  N=2j+1&(j+1)\gamma/2&
    -(1-\Delta)(1-q^{j+1})/[L(1-q)]\\[7pt]
  N=2j&j(1-\gamma/2)&(1-q^j)(1-\Delta)/[L(1-q)].
 \end{array}
\]
The lower-degree defining sums give the component polynomials in
\eqref{eq:KLS-PK-components} and \eqref{eq:KLS-qPK-components},
within their stated ranges. Substitution at the remaining indices gives
at $t=0$ the identities $\mathcal A_{N-n}=\mathcal C_n$ and
$\mathcal C_{N-n}=\mathcal A_n$, hence the reflections in
Proposition~\ref{prop:central-block}; the displayed factors give its
centres. Finally $\mathcal A_N\to0$, and induction
in the monic recurrence includes $P_{N+1}=\Pi_0\Pi_1$.
Thus the limits hold coefficient by coefficient through the terminal
degree. The Wilson and Askey--Wilson limits are the published
truncations cited above; in all four cases the direction of approach
retains the parameter that a singular substitution would discard.

\subsection{The finite components as classical polynomial expressions}

Write $Q_n^{(0)},Q_n^{(1)}$ for the monic orthogonal polynomials
of the separate component functionals, with $0\le n\le r$ and
$0\le n\le\ell$, respectively. These are component systems;
mixture polynomials of larger degree require the full construction.

\paragraph{\textbf{Para--Krawtchouk components: Hahn.}}
For the Hahn polynomial $Q_n(z;A,B,R)$ of
\cite[Section~9.5, Eq.~(9.5.1), p.~204]{KLS10}, set
$A=-\ell-1-\gamma/2$, $B=-r-1+\gamma/2$. Then
\begin{equation}\label{eq:KLS-PK-components}
 \begin{aligned}
 Q_n^{(0)}(x)&=\mathsf M_x Q_n(x/2;A,B,r),\\[7pt]
 Q_n^{(1)}(x)&=\mathsf M_x Q_n((x-\gamma)/2;B,A,\ell).
 \end{aligned}
\end{equation}
For $0<\gamma<2$, these parameters lie in the positive orthogonality
range with negative parameters explicitly included in
\cite[Eq.~(9.5.2)]{KLS10}.

\paragraph{\textbf{Para--Racah components: Racah.}}
For $R_n(z;A,B,G,D)$ of
\cite[Section~9.2, Eqs.~(9.2.1)--(9.2.2), p.~190]{KLS10},
\begin{equation}\label{eq:KLS-PR-components}
 Q_n^{(0)}(x)=\mathsf M_x R_n
      (-x-a^2;-r-1,-\ell-1,a+c-1,a-c).
\end{equation}
Interchanging $(a,r)$ and $(c,\ell)$ gives $Q_n^{(1)}$.
The standard Racah variable becomes $k(k+2a)=-x-a^2$ on
$x=-(a+k)^2$.

\paragraph{\textbf{$q$-para--Racah components: $q$-Racah.}}
For $R_n(z;A,B,G,D\mid q)$ of
\cite[Section~14.2, Eqs.~(14.2.1)--(14.2.2), p.~422]{KLS10},
\begin{equation}\label{eq:KLS-qPR-components}
 Q_n^{(0)}(x)=\mathsf M_x R_n
       (2ax;q^{-r-1},q^{-\ell-1},ac/q,a/c\mid q).
\end{equation}
Again interchange $(a,r)$ and $(c,\ell)$ for the second component.
The standard variable becomes $q^{-k}+a^2q^k=2ax$.

\paragraph{\textbf{$q$-para--Krawtchouk components: $q$-Hahn.}}
For $Q_n(z;A,B,R\mid q)$ of
\cite[Section~14.6, Eq.~(14.6.1), p.~445]{KLS10}, put
$A=\Delta^{-1}q^{-r-1}$, $B=\Delta q^{-\ell-1}$,
$L=\Delta q^r$. Then
\begin{equation}\label{eq:KLS-qPK-components}
 \begin{aligned}
 Q_n^{(0)}(x)&=\mathsf M_x Q_n(x/L;A,B,r\mid q),\\[7pt]
 Q_n^{(1)}(x)&=\mathsf M_x Q_n(x/q^\ell;B,A,\ell\mid q).
 \end{aligned}
\end{equation}
Reversing the node index within each progression gives the
nodes $q^{-k}$ of \cite[Eq.~(14.6.2)]{KLS10}; the polynomial variables undergo exactly
the displayed dilations.

\subsection{The alternating family: quadratic Wilson expressions}

The alternating reconstruction of \cite[Proposition~1.3]{CG26b}
and the Geronimus inversion explain these families;
Section~\ref{sec:para-BI} proves their finite form.
Their correspondence with the Wilson family in \cite{KLS10} is quadratic.
Use the complementary family of Section~\ref{sec:para-BI} and the
parameter paths below, following \cite[Eqs.~(2.1)--(2.3)]{PVZ23}:
\begin{equation}\label{eq:KLS-pBI-Wilson}
 \begin{aligned}
 P_{2n}^{\rm C}(x)
 &=\lim_{t\to0}\mathsf M_x W_n
       (-x^2;\rho_2(t),\rho_1(t)+1,
                  \tfrac12-r_1(t),\tfrac12-r_2(t)),\\[7pt]
 P_{2n+1}^{\rm C}(x)
 &=(x-A)\lim_{t\to0}\mathsf M_x W_n
       (-x^2;\rho_2(t)+1,\rho_1(t)+1,
                  \tfrac12-r_1(t),\tfrac12-r_2(t)),
 \end{aligned}
\end{equation}
for degrees through the complementary terminal degree $2j+1$.
Put $\rho=(b-j-1+a)/4$ and use the following substitutions
\cite[Eqs.~(3.1)--(3.2) and~(A.1)]{PVZ23}:
\[
 \begin{array}{ll}
 j\text{ even}:&
 \rho_1(t)=\rho,\quad \rho_2(t)=A=(b-j-1-a)/4,\\[7pt]
 &r_1(t)=\rho+(j+1)/2-\alpha t,\\[7pt]
 &r_2(t)=A+(j+1)/2-(1-\alpha)t;\\[7pt]
 j\text{ odd}:&
 \rho_1(t)=\rho,\quad r_1(t)=-(b-j-1-a)/4,\\[7pt]
 &\rho_2(t)=-(j+1)/2-\rho+\alpha t,\\[7pt]
 &r_2(t)=(j+1)/2-r_1(t)-(1-\alpha)t,
 \end{array}
\]
with $A=\rho_2(0)$ in the odd case. The Bannai partner follows
from the Geronimus relation \eqref{eq:BI-Geronimus} and the full
weights \eqref{eq:BI-partner-masses}. Section~\ref{sec:para-BI}
identifies the quadratic input and proves the global alternating
structural equation. These expressions locate the family within
the common framework without claiming an ordinary affine Racah
identification for the full alternating system.

\needspace{14\baselineskip}
\section*{Acknowledgements}
The authors acknowledge financial support from the Centre for
Mathematics of the University of Coimbra (CMUC), funded by the
Portuguese Foundation for Science and Technology (FCT), under the
projects UID/00324/2025 and UID/PRR/00324/2025. The first author
acknowledges financial support from the FCT under grant
2022.00143.\allowbreak{}CEECIND/\allowbreak{}CP1714/\allowbreak{}CT0002.
The second author acknowledges financial support from the FCT under
grant UI.BD.\allowbreak154694.2023.

ChatGPT~6 was used for an additional check of suspected errors in
Section~\ref{sec:discussion}, and helped refine several counterexamples
to make their scope explicit. The mathematical development of the paper
predates this use. Responsibility for the arguments and conclusions
remains with the authors.

For the authors, the support acknowledged here means the salary of an
assistant researcher and the funding of a young doctoral student.
Our modest remuneration does not diminish our commitment to scientific
rigour and social responsibility.

\section*{Conflict of interest}
The authors declare that they have no conflict of interest.

\end{document}